\numberwithin{equation}{section}
\begin{document}

\newtheorem{thm}{Theorem}[section]
\newtheorem{prop}[thm]{Proposition}
\newtheorem{lem}[thm]{Lemma}
\newtheorem{cor}[thm]{Corollary}
\newtheorem{rem}[thm]{Remark}
\newtheorem*{defn}{Definition}

\newcommand{\cb}{\color{blue}}

\newcommand{\DD}{\mathbb{D}}
\newcommand{\NN}{\mathbb{N}}
\newcommand{\ZZ}{\mathbb{Z}}
\newcommand{\QQ}{\mathbb{Q}}
\newcommand{\RR}{\mathbb{R}}
\newcommand{\CC}{\mathbb{C}}
\renewcommand{\SS}{\mathbb{S}}

\renewcommand{\theequation}{\arabic{section}.\arabic{equation}}

\newcommand\ddfrac[2]{\frac{\displaystyle #1}{\displaystyle #2}}

\newcommand{\supp}{\mathop{\mathrm{supp}}}    

\newcommand{\re}{\mathop{\mathrm{Re}}}   
\newcommand{\im}{\mathop{\mathrm{Im}}}   
\newcommand{\dist}{\mathop{\mathrm{dist}}}  
\newcommand{\link}{\mathop{\circ\kern-.35em -}}
\newcommand{\spn}{\mathop{\mathrm{span}}}   
\newcommand{\ind}{\mathop{\mathrm{ind}}}   
\newcommand{\rank}{\mathop{\mathrm{rank}}}   
\newcommand{\Fix}{\mathop{\mathrm{Fix}}}   
\newcommand{\codim}{\mathop{\mathrm{codim}}}   
\newcommand{\conv}{\mathop{\mathrm{conv}}}   
\newcommand{\osc}{\mathop{\mathrm{osc}}}  
\newcommand{\epsi}{\mbox{$\varepsilon$}}
\newcommand{\eps}{\mathchoice{\epsi}{\epsi}
{\mbox{\scriptsize\epsi}}{\mbox{\tiny\epsi}}}
\newcommand{\cl}{\overline}
\newcommand{\pa}{\partial}
\newcommand{\ve}{\varepsilon}
\newcommand{\zi}{\zeta}
\newcommand{\Si}{\Sigma}
\newcommand{\cD}{{\mathcal D}}
\newcommand{\cE}{{\mathcal E}}
\newcommand{\cF}{{\mathcal F}}
\newcommand{\cG}{{\mathcal G}}
\newcommand{\cH}{{\mathcal H}}
\newcommand{\cI}{{\mathcal I}}
\newcommand{\cJ}{{\mathcal J}}
\newcommand{\cK}{{\mathcal K}}
\newcommand{\cL}{{\mathcal L}}
\newcommand{\cN}{{\mathcal N}}
\newcommand{\cR}{{\mathcal R}}
\newcommand{\cS}{{\mathcal S}}
\newcommand{\cT}{{\mathcal T}}
\newcommand{\cU}{{\mathcal U}}
\newcommand{\B}{\bullet}
\newcommand{\ol}{\overline}
\newcommand{\ul}{\underline}
\newcommand{\vp}{\varphi}
\newcommand{\AC}{\mathop{\mathrm{AC}}}   
\newcommand{\Lip}{\mathop{\mathrm{Lip}}}   
\newcommand{\es}{\mathop{\mathrm{esssup}}}   
\newcommand{\les}{\mathop{\mathrm{les}}}   
\newcommand{\nid}{\noindent}
\newcommand{\pzr}{\phi^0_R}
\newcommand{\pir}{\phi^\infty_R}
\newcommand{\psr}{\phi^*_R}
\newcommand{\pow}{\frac{N}{N-1}}
\newcommand{\ncl}{\mathop{\mathrm{nc-lim}}}   
\newcommand{\nvl}{\mathop{\mathrm{nv-lim}}}  
\newcommand{\la}{\lambda}
\newcommand{\La}{\Lambda}    
\newcommand{\de}{\delta}    
\newcommand{\fhi}{\varphi} 
\newcommand{\ga}{\gamma}    
\newcommand{\ka}{\kappa}   

\newcommand{\core}{\heartsuit}
\newcommand{\diam}{\mathrm{diam}}

\newcommand{\lan}{\langle}
\newcommand{\ran}{\rangle}
\newcommand{\tr}{\mathop{\mathrm{tr}}}
\newcommand{\diag}{\mathop{\mathrm{diag}}}
\newcommand{\dv}{\mathop{\mathrm{div}}}

\newcommand{\al}{\alpha}
\newcommand{\be}{\beta}
\newcommand{\Om}{\Omega}
\newcommand{\na}{\nabla}

\newcommand{\cC}{\mathcal{C}}
\newcommand{\cM}{\mathcal{M}}
\newcommand{\nr}{\Vert}
\newcommand{\De}{\Delta}
\newcommand{\cX}{\mathcal{X}}
\newcommand{\cP}{\mathcal{P}}
\newcommand{\om}{\omega}
\newcommand{\si}{\sigma}
\newcommand{\te}{\theta}
\newcommand{\Ga}{\Gamma}


\title[An integral identity and the reverse Serrin problem]{A general integral identity with applications to a reverse Serrin problem}

\author[Magnanini]{Rolando Magnanini}
\address{Dipartimento di Matematica ed Informatica ``U.~Dini'',
Universit\` a di Firenze, viale Morgagni 67/A, 50134 Firenze, Italy.}
    \email{rolando.magnanini@unifi.it}
    \urladdr{https://people.dimai.unifi.it/magnanini/}

\author[Molinarolo]{Riccardo Molinarolo}
\address{Dipartimento di Matematica ed Informatica ``U.~Dini'',
Universit\` a di Firenze, viale Morgagni 67/A, 50134 Firenze, Italy.}
\curraddr{Dipartimento per lo Sviluppo Sostenibile e la Transizione Ecologica (DiSSTE), Universit\`a degli Studi del Piemonte Orientale ``A. Avogadro",
Complesso San Giuseppe -- Piazza Sant'Eusebio 5,
13100 Vercelli (IT)
}
    \email{riccardo.molinarolo@uniupo.it}
    
\author{Giorgio Poggesi}
\address{Department of Mathematics and Statistics, The University of Western Australia, 35 Stirling Highway, Crawley, Perth, WA 6009, Australia}

    \email{giorgio.poggesi@uwa.edu.au}


\begin{abstract}
We prove a new general differential identity and an associated integral identity, which entails a pair of solutions of the Poisson equation with constant source term. This generalizes a formula that the first and third authors previously  proved and used to obtain quantitative estimates of spherical symmetry for the Serrin overdetermined boundary value problem.
As an application,
we prove 
a quantitative symmetry result for the \textit{reverse Serrin problem}, which we introduce for the first time in this paper.  
In passing, we obtain a rigidity result for solutions of the aforementioned Poisson equation subject to a constant Neumann condition. 
\end{abstract}

\keywords{Integral identities, Serrin overdetermined problem, rigidity results, stability, quantitative symmetry}
\subjclass[2010]{Primary 35N25, 35B35, 35M12; Secondary 35A23}

\maketitle

\raggedbottom

\section{Introduction}
Let $\Om$ be a bounded domain in the Euclidean space $\RR^N$, $N\ge 2$, with sufficiently regular boundary $\Ga$. We consider solutions of the Poisson equation: 
\begin{equation}
\label{torsion}
\De u=N \ \mbox{ in } \ \Om.
\end{equation}
In \cite{MP1, MP2}, the first and third author of this paper stated and then proved the following integral identity: 
\begin{equation}
\label{identity-for-serrin}
\int_{\Om} (-u) \left\{ |\na ^2 u|^2- \frac{ (\De u)^2}{N} \right\} dx=
\frac{1}{2}\,\int_\Ga \left( u_\nu^2- R^2\right) (u_\nu-q_\nu)\,dS_x.
\end{equation}
Here, $u$ is a solution of \eqref{torsion} such that
\begin{equation}
\label{dirichlet-homogeneous}
u=0 \ \mbox{ on } \ \Ga
\end{equation}
and $q=q^z$ is  a quadratic polynomial of the form
\begin{equation}
\label{quadratic}
q^z(x)=\frac12\,|x-z|^2 +a\ \mbox{ for } \ x\in\RR^N,
\end{equation}
for some $z\in \RR^N$ and $a\in \RR$.
Moreover, $\nu$ is the exterior unit normal on $\Ga$
and the constant $R$ has the value
\begin{equation}
\label{def-R}
R=\frac{N\,|\Om|}{|\Ga|}.
\end{equation}
Also, notice that  it is easy to see that
$$
|\na ^2 u|^2-\frac{(\De u)^2}{N}=\De P,
$$
when we set:
$$
P=\frac12\,|\na u|^2-u.
$$
This is a standard $P$-function for \eqref{torsion}.
\par
The function $\De P$ is important. It is non-negative, by the Cauchy-Schwarz inequality and vanishes if and only if $u$ is a quadratic polynomial as in \eqref{quadratic}. Thus, it gives a \textit{quadratic-radiality test} for $u$. 
By this important feature, \eqref{identity-for-serrin}  yields the Serrin Symmetry Theorem (see \cite{Se} for the  celebrated proof by the method of moving planes and \cite{We} for the first proof based on integral identities). 
This asserts that, if the solution $u$ of \eqref{torsion} and \eqref{dirichlet-homogeneous} has constant normal derivative $u_\nu$ on $\Ga$, then $\Om$ must be a ball. This is now easily seen from \eqref{identity-for-serrin}, since it turns out that the constant value of $u_\nu$ on $\Ga$ must equal $R$, by the divergence theorem. Thus, \eqref{identity-for-serrin}  gives that $\De P\equiv 0$ (and hence radial symmetry), being as $-u>0$ on $\Om$, by the strong maximum principle. The use of the volume integral in \eqref{identity-for-serrin} to test the radiality of $u$ was first noticed in \cite{PS}.
\par
Besides giving rigidity, \eqref{identity-for-serrin} 
is even more important, because it allows to construct sharp quantitative estimates on how the relevant domain $\Om$ deviates from being a ball (in the Hausdorff topology of domains) in terms of 
some norm of the deviation of $u_\nu$ from a constant.
This theme was inaugurated in \cite{ABR}, by using a quantitative version of the method of moving planes. There, a quantitative symmetry estimate is obtained in terms of a logarithmic profile of a $C^1(\Ga)$-norm of the aforementioned deviation. It is worth noticing that this approach also works for positive solutions of quite general semilinear Poisson equations. That strategy was improved in \cite{CMV} thus obtaining  a quantitative symmetry estimate in terms of a polynomial profile of the Lipschitz seminorm of $u_\nu$. (See also \cite{CM, CMS1, CMS2} for related research and \cite{CDPPV, DPTV1, DPTV2} for recent generalizations to the fractional setting.)
\par
An approach based on a quantitative version of the arguments contained in \cite{We} was initiated in \cite{BNST}. The method only works for the Poisson equation \eqref{torsion}. However, the measure of the relevant deviation of $u_\nu$ from a constant is greatly relaxed by using a Lebesgue norm on $\Ga$.  (We refer the interested reader to \cite{Ma, MP5, PogTesi} for a survey on the several stability results available in the literature; see also \cite{CPY, DPVSerrin} for related results in different settings.)
\par
Identity \eqref{identity-for-serrin} was used by
the first and third author to obtain   optimal (for low dimension) and allegedly\footnote{Here, we mean that we conjecture that the bounds obtained in \cite{MP5} are indeed optimal.}optimal (for large dimension) quantitative bounds. This was the theme of a series of papers  (\cite{MP2, MP3, MP5}) culminating with the proof
of the inequality
\begin{equation}
\label{stability-classical-serrin}
\rho_e-\rho_i\le c\,\psi(\nr u_\nu-R\nr_{2,\Ga}).
\end{equation}
(See also \cite{MP6, Po, PPR} for recent extensions to mixed boundary value problems.) In \eqref{stability-classical-serrin}, we mean that
\begin{equation}
\label{def-rhos}
\rho_e=\max_{x\in\Ga}|x-z|, \quad \rho_i=\min_{x\in\Ga}|x-z|,
\end{equation}
for some point $z\in\Om$. Hence, $\rho_i$ and $\rho_e$ are the radii of the largest ball contained in $\Om$ and the smallest ball containing $\Om$, both centered at $z$.\footnote{ In \cite{MP5}, it is also shown that, besides $\rho_e-\rho_i$, the $L^2$-deviation of the Gauss map of $\Ga$ from that of a sphere can be estimated by the same profile $\psi$.}
\par
The profile $\psi:(0,\infty)\to(0,\infty)$ changes with the dimension. 
In fact, by \cite{MP3} (and the improvement provided by \cite{MP5} in the case $N=3$), we have that
\begin{equation}
\label{eq:old-profile-psi}
\psi(t)=\begin{cases}
           t
            & \text{if } N=2 ,
            \\
            t \max \left[\log \left(1/t\right), 1 \right]  
            & \text{if } N=3 ,
            \\
            t^{2/(N-1)}          
            & \text{if } N \geq 4,
        \end{cases}
\end{equation}
for $t>0$.
The constant $c$ in \eqref{stability-classical-serrin} only depends on the dimension $N$, the diameter $d_\Om$ of $\Om$, and the radii $r_i$ and $r_e$ of the uniform interior and exterior sphere conditions of $\Om$,  as defined in Section \ref{sec:preliminary}. (We recall that 
those conditions
are equivalent to the $C^{1,1}$ regularity of $\Ga$ as shown in \cite[Corollary 3.14]{ABMMZ}.) In \cite{MP5} improvements of the profile are also provided for $N\ge 4$. In particular, if $\Ga$ is of class $C^{2,\al}$, then \cite[Theorem 4.4]{MP5} ensures that \eqref{stability-classical-serrin} holds true with 
\begin{equation}\label{eq:improvementMinE}
	\psi(t)= t^{4/(N+1)} \quad \text{ for }  N\ge 4,
\end{equation}
provided $c$ depends on the $C^{2,\al}$-regularity of $\Ga$, instead of its $C^{1,1}$-regularity.

\medskip

In this paper, we derive an integral identity, which generalizes \eqref{identity-for-serrin}. 
It simply involves any two solutions $u$ and $v$ of class $C^2(\ol{\Om})$ of \eqref{torsion}, and reads as: 
\begin{multline}
\label{fundamental-identity}
\int_\Om (\ol{u}-u)\,\De P\,dx +  \int_\Om \lan (I - \na^2 v) \na u, \na u\ran \,dx =\\
\int_\Ga (\ol{u}-u)\,\lan\na^2 u\, \na u,\nu\ran\,dS_x +
\frac12\,\int_\Ga |\na u|^2 (u_\nu+ v_\nu)\,dS_x+ \\
-\int_\Ga \lan \na v,\na u \ran \,u_\nu\,dS_x 
-\int_\Ga (\ol{u}-u)\,u_\nu\,dS_x
+
N \int_\Ga (\ol{u}-u) (u_\nu - v_\nu) \,dS_x.
\end{multline}
The proof is a simple application of the divergence theorem to an appropriate differential identity (see Theorem \ref{th:fundamental-identity} for details).
\par
In \eqref{fundamental-identity}, we mean that
$$
\ol{u}=\max_{\ol{\Om}} u=\max_\Ga u,
$$
so that $\ol{u}-u>0$ in $\Om$, by the strong maximum principle.
Of course, if we require that $u$ satisfies \eqref{dirichlet-homogeneous} or, more generally, 
\begin{equation}
\label{dirichlet}
u=u_0 \ \mbox{ on } \ \Ga,
\end{equation}
for some $u_0\in\RR$, \eqref{fundamental-identity} will simply return \eqref{identity-for-serrin} (in the latter case with $-u$ replaced by $\ol{u}-u=u_0-u$).
\par
The significant generality of \eqref{fundamental-identity} opens the path to its application to old and new rigidity results for solutions of the Poisson equation \eqref{torsion}.
In this paper, we shall test the effectiveness of \eqref{fundamental-identity} by analysing the stability of what we shall call the \textit{reverse Serrin problem}, as opposed to the \textit{classical} Serrin problem.  The former consists in considering a solution of \eqref{torsion} subject to the Neumann condition
\begin{equation}
\label{neumann}
u_\nu=R \ \mbox{ on } \ \Ga,
\end{equation}
and then  imposing that $u$ also satisfies \eqref{dirichlet}. 
\par
Of course, the rigidity part of this problem gives the same (spherical) solution of the classical Serrin problem.  The stability part is however not explored.
\par
We shall see in Section \ref{sec:discussion reverse Serrin via Serrin} that one can derive stability estimates for the reverse Serrin problem by exploiting those already obtained for the classical Serrin problem. However, this procedure leads to estimates which require a strong measure of the deviation of $u$ from being constant on $\Ga$. In fact, we need to use the norm $\nr \ol{u} - u \nr_{C^{1,\al}(\Ga)}$, as shown in Theorem \ref{thm:Alternative-stability-reverse-serrin}. 
\par
Instead, the full power of \eqref{fundamental-identity} (in place of \eqref{identity-for-serrin}) allows to weaken such a stringent deviation.
In fact, in the two stability results we shall describe next, the deviation of the function $u$ from being a constant on $\Ga$ will be measured in terms of its \textit{oscillation},
$$
\osc_\Ga u = \max_\Ga u - \min_\Ga u,
$$
and some suitable norm of its \textit{tangential gradient} $\na_\Ga u$ (see Section \ref{sec:preliminary} for its definition).
Our first stability result reads as follows.

\begin{thm}[The reverse Serrin problem with uniform deviation]
	\label{thm:prova}
	Let $\Om \subset \RR^N$, $N \geq 2$, be a bounded domain 
with boundary $\Ga$ of class $C^2$. 
\par
Suppose $u\in C^2(\ol{\Om})$ is a solution of \eqref{torsion}, \eqref{neumann} and let $z$ be any minimum point of $u$ on $\ol{\Om}$.
Then it holds that 
	\begin{equation*}
		\rho_e-\rho_i \le c\,\psi\left( \osc_\Ga u \, + \|\na_\Ga u\|_{\infty,\Ga}\right),
	\end{equation*}
	where $\psi$ is given in \eqref{eq:old-profile-psi}.  
	The constant $c>0$ depends on $N, d_\Om$, $r_i$, and $r_e$. 
\end{thm}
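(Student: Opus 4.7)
The plan is to specialize the general identity \eqref{fundamental-identity} to $v(x)=q^z(x)=\tfrac12|x-z|^2$, where $z$ is the given minimum point of $u$. Since $\nabla^2 v\equiv I$, the second integral on the left-hand side vanishes, and the pointwise tensor identity $|\nabla^2 u|^2-(\Delta u)^2/N=|\nabla^2 u-I|^2$, together with $\Delta u=N$, collapses the left-hand side to
\begin{equation*}
\int_\Om (\ol{u}-u)\,|\nabla^2 u-I|^2\,dx=\int_\Om(\ol{u}-u)\,|\nabla^2 h|^2\,dx\;\ge\;0,
\end{equation*}
where $h:=u-q^z$ is harmonic and $\ol{u}-u\ge 0$ in $\Om$ by the strong maximum principle applied to $\Delta(\ol{u}-u)=-N<0$.

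The first main step is to bound each of the five boundary integrals on the right-hand side of \eqref{fundamental-identity} by a constant (depending only on $N,d_\Om,r_i,r_e$) times $\osc_\Gamma u+\|\nabla_\Gamma u\|_{\infty,\Gamma}$. Exploiting $u_\nu\equiv R$ on $\Gamma$ and the orthogonal decomposition $\nabla u=\nabla_\Gamma u+R\nu$, a direct computation shows that the contributions of the second and third boundary integrals coming solely from the normal component $R\nu$ of $\nabla u$ cancel exactly, thanks to the divergence-theorem identity $\int_\Gamma\langle x-z,\nu\rangle\,dS_x=N|\Om|=R|\Gamma|$. This cancellation is the manifestation of the consistency check that, were $u$ also constant on $\Gamma$, \eqref{fundamental-identity} would reduce to \eqref{identity-for-serrin}, whose right-hand side vanishes here because $u_\nu^2-R^2\equiv 0$. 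All surviving contributions then carry either a factor $(\ol{u}-u)|_\Gamma$, pointwise bounded by $\osc_\Gamma u$, or an explicit factor $\nabla_\Gamma u$, controlled in $L^\infty$ by $\|\nabla_\Gamma u\|_{\infty,\Gamma}$. The only delicate term is $\int_\Gamma(\ol{u}-u)\langle\nabla^2 u\,\nabla u,\nu\rangle\,dS_x$, for which a uniform $L^\infty(\Gamma)$ bound on $\nabla^2 u$ is needed; this is provided by standard elliptic regularity for \eqref{torsion}, \eqref{neumann} on a $C^2$ boundary, with constants depending only on $N,d_\Om,r_i,r_e$.

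Once the weighted estimate $\int_\Om(\ol{u}-u)\,|\nabla^2 h|^2\,dx\le c\,(\osc_\Gamma u+\|\nabla_\Gamma u\|_{\infty,\Gamma})$ is in hand, I would invoke the stability toolkit developed in \cite{MP2,MP3,MP5}, with the role of the classical weight $-u$ now played by $\ol{u}-u$. Quantitative interior lower bounds for $\ol{u}-u$ on a suitable ball around $z$ follow from barrier/comparison against $\Delta(\ol{u}-u)=-N$, using the interior sphere condition of radius $r_i$ and the fact that $z$ is a minimum point of $u$. Combined with the harmonicity of $h$, these transfer the weighted $L^2$-smallness of $\nabla^2 h$ into $L^\infty$-smallness of $\nabla h$ on a small ball, and the profile-dependent interpolation of \cite{MP3,MP5} then promotes this to the announced bound $\rho_e-\rho_i\le c\,\psi(\cdot)$ with $\psi$ as in \eqref{eq:old-profile-psi}. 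The main obstacle is the first step: carrying out the boundary algebra to isolate the oscillation and tangential-gradient contributions cleanly, and securing the $L^\infty$ bound on $\nabla^2 u$ in terms of the allowed quantities alone. The second step is a relatively direct adaptation of the existing machinery.
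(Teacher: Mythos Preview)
Your overall strategy---specialize \eqref{fundamental-identity} with $v=q^z$, bound the right-hand side by the deviation, then feed the weighted Hessian estimate into the interpolation machinery of \cite{MP3,MP5}---is exactly the route the paper takes. However, your treatment of the boundary term $\int_\Ga(\ol{u}-u)\langle\nabla^2 u\,\nabla u,\nu\rangle\,dS_x$ has two genuine gaps.

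First, you propose to control this term via an $L^\infty(\Ga)$ bound on $\nabla^2 u$ ``provided by standard elliptic regularity \ldots\ with constants depending only on $N,d_\Om,r_i,r_e$''. This is not available: the parameters $r_i,r_e$ encode only $C^{1,1}$ regularity of $\Ga$, and neither Schauder nor Calder\'on--Zygmund theory yields a uniform $L^\infty$ bound on second derivatives under $C^{1,1}$ (or even $C^2$) boundary regularity. The hypothesis $u\in C^2(\ol{\Om})$ does not help, since its norm is not among the allowed constants. The paper avoids this obstacle entirely: via item (ii) of Lemma~\ref{lem:normal-modulus-gradient} it rewrites $\langle\nabla^2 u\,\nabla u,\nu\rangle$ on $\Ga$ as $-\langle(\nabla\nu)\nabla_\Ga u,\nabla_\Ga u\rangle+R[N-\Delta_\Ga u-(N-1)RH]$, which involves only first derivatives of $u$ and the curvatures of $\Ga$ (bounded by $1/r_i$, $1/r_e$); the $\Delta_\Ga u$ piece is then handled by integration by parts on $\Ga$.

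Second, even granting a Hessian bound, your crude estimate gives only a \emph{linear} bound
\[
\int_\Om(\ol{u}-u)\,|\nabla^2 h|^2\,dx\le c\bigl(\osc_\Ga u+\|\nabla_\Ga u\|_{\infty,\Ga}\bigr),
\]
because several boundary terms (e.g.\ the one arising from $(N-1)u_\nu-Nq_\nu$) are genuinely first order in $\ol{u}-u$. Taking square roots then yields $\|\sqrt{\de_\Ga}\,\nabla^2 h\|_{2,\Om}\le c\sqrt{\text{dev}}$, and the interpolation of Lemma~\ref{thm osc bound} produces only $\rho_e-\rho_i\le c\,\psi(\sqrt{\text{dev}})$, strictly weaker than the claimed $\psi(\text{dev})$ (for $N=2$ this is $\sqrt{t}$ versus $t$). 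The paper's derivation of \eqref{neumann-identity-intro} is designed precisely so that every surviving boundary term is either quadratic in the deviation or carries a factor $h_\nu$; the latter is then absorbed into the left-hand side via the trace-type inequality $\|h_\nu\|_{2,\Ga}^2\le c\int_\Om(\ol{u}-u)|\nabla^2 h|^2\,dx$ (from \cite[Lemma~2.5]{MP3} and comparison), yielding the quadratic bound of Lemma~\ref{lem bound in W_1,2,Ga}. Without this structural reduction, the stated profile cannot be reached.
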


We singled out this theorem since it is relatively easy to obtain from our new identity. 
Moreover, 
the dependence of the constant $c$ on the mentioned parameters
can be written explicitly.
\par
In the next result, we show how the deviation of $u$ from being a constant can be further weakened.

\begin{thm}[The reverse Serrin problem with weak deviation]
\label{thm:stability-reverse-serrin}
Let $\Om \subset \RR^N$, $N \geq 2$, be a bounded domain with boundary $\Ga$ of class $C^{2,\al}$, with $0<\al<1$.
\par
Suppose $u\in C^2(\ol{\Om})$ is a solution of \eqref{torsion}, \eqref{neumann} and let $z$ be any minimum point of $u$ on $\ol{\Om}$.
Then it holds that 
    \begin{equation*} 
        \rho_e-\rho_i \le c\,\psi\left( \osc_\Ga u \, + \|\na_\Ga u\|_{2,\Ga}\right),
    \end{equation*}
where $\psi$ is given in \eqref{eq:old-profile-psi}.  
The constant $c>0$ depends on $N, d_\Om$, $r_i$, and $r_e$. 
\par
Moreover, for $N \ge 4$ the profile $\psi$ can be improved to \eqref{eq:improvementMinE}. In this case, the dependence of $c$ on $r_i$ and $r_e$ 
must be replaced with that on the $C^{2,\al}$-regularity of $\Ga$.
\end{thm}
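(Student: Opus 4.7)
The plan is to exploit the new fundamental identity \eqref{fundamental-identity} with the natural choice $v(x) = q^z(x) = \frac{1}{2}|x-z|^2$, where $z$ is the given minimum point of $u$. Since $\na^2 v\equiv I$, the second volume integral on the left-hand side vanishes, leaving the non-negative quantity $\int_\Om (\ol u - u)\,\De P\,dx$. This is precisely the bridge used in \cite{MP3, MP5} to convert deviations measured on $\Ga$ into geometric information on $\rho_e - \rho_i$: those papers establish a volumetric-to-geometric inequality of the form $\rho_e - \rho_i \le c\,\psi\bigl(\int_\Om (\ol u - u)\,\De P\, dx\bigr)$, with $\psi$ given by \eqref{eq:old-profile-psi} when $\Ga\in C^{1,1}$ and by \eqref{eq:improvementMinE} for $N\ge 4$ when $\Ga\in C^{2,\al}$ (cf.\ \cite[Theorem 4.4]{MP5}). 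The entire proof therefore reduces to controlling the right-hand side of \eqref{fundamental-identity} by the ``reverse'' deviation $\eta:=\osc_\Ga u + \|\na_\Ga u\|_{2,\Ga}$.

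To that end, I would insert into the five boundary integrals of \eqref{fundamental-identity} the relations $u_\nu = R$, $|\na u|^2 = R^2 + |\na_\Ga u|^2$, $\na u = R\nu + \na_\Ga u$, $\na v = x - z$, and $v_\nu = (x-z)\cdot \nu$, and then expand. Using the two identities $R|\Ga| = N|\Om|$ and $\int_\Ga (x-z)\cdot\nu\,dS_x = N|\Om|$ (divergence theorem), a routine but careful bookkeeping shows that the leading $R^3|\Ga|$- and $R^2$-type contributions in the expansion cancel exactly. What remains takes the schematic form
\[
\int_\Ga (\ol u - u)\,\lan \na^2 u\,\na u,\nu\ran\,dS_x \;+\; \bigl(\text{terms controlled by $\osc_\Ga u$ or by $\|\na_\Ga u\|_{2,\Ga}$}\bigr),
\]
where the second group is handled directly by Cauchy--Schwarz and by bounds on $|x-z|\le d_\Om$ on $\Ga$.

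The main obstacle is the first term, which still involves $\na^2 u$ on $\Ga$. Here I would exploit the Neumann condition more subtly: differentiating $u_\nu\equiv R$ tangentially along any $e_i\in T\Ga$ gives the Codazzi-type identity $\lan \na^2 u\,\nu, e_i\ran = \lan h\,e_i, \na_\Ga u\ran$, where $h$ is the second fundamental form of $\Ga$. Combining this with $\na u = R\nu + \na_\Ga u$ yields
\[
\lan \na^2 u\,\na u, \nu\ran = R\,u_{\nu\nu} + \lan h\,\na_\Ga u,\na_\Ga u\ran,
\]
and the Poisson equation rewritten on the boundary gives $u_{\nu\nu} = \De u - \De_\Ga u - H u_\nu = N - \De_\Ga u - HR$, with $H=\tr h$. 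Integrating $(\ol u - u)$ against these terms on the closed manifold $\Ga$ and integrating by parts turns $-R\int_\Ga (\ol u - u)\De_\Ga u\,dS_x$ into $R\,\|\na_\Ga u\|_{2,\Ga}^2$, while the contribution of $R(N-HR)$ produces a term bounded by $\osc_\Ga u\,|\Ga|$ with prefactors depending only on $R$ and $\|H\|_\infty$ (hence ultimately on $N, r_i, r_e$), and $\lan h\,\na_\Ga u, \na_\Ga u\ran$ is dominated by $\|h\|_\infty \|\na_\Ga u\|_{2,\Ga}^2$.

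Assembling all the estimates gives $\int_\Om(\ol u - u)\,\De P\,dx \le c\,\eta$ for small $\eta$, with $c$ depending only on $N, d_\Om, r_i, r_e$. Plugging this into the stability inequality from \cite{MP3} yields the first half of Theorem \ref{thm:stability-reverse-serrin} with profile \eqref{eq:old-profile-psi}; plugging it instead into \cite[Theorem 4.4]{MP5} --- which requires the $C^{2,\al}$ hypothesis on $\Ga$ already assumed here --- upgrades the profile to \eqref{eq:improvementMinE} for $N\ge 4$, completing the proof.
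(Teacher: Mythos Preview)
Your overall strategy is sound and is in fact essentially the paper's own route: the choice $v=q^z$, the boundary decomposition of $\lan\na^2 u\,\na u,\nu\ran$ via the second fundamental form and the formula $u_{\nu\nu}=N-\De_\Ga u-(N-1)HR$, and the integration by parts on $\Ga$ all appear in the paper (Lemma~\ref{lem:normal-modulus-gradient}(ii) and Corollary~\ref{cor:locally-neumann}), and the resulting identity is exactly \eqref{neumann-identity-intro}. The bound you sketch on the right-hand side is, modulo an absorption argument for the term carrying $h_\nu$, the content of Lemma~\ref{lem bound in W_1,2,Ga}.

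The gap is in the last step. You cannot simply ``plug into the stability inequality from \cite{MP3,MP5}'' as a black box: for $N\ge 3$ those oscillation estimates (recorded here as Lemma~\ref{thm osc bound}) carry a factor $\nr\na h\nr_{\infty,\Om}$, and the constants also depend on a lower bound for $\de_\Ga(z)$. In the classical Serrin setting, and even in Theorem~\ref{thm:prova} with the uniform deviation $\nr\na_\Ga u\nr_{\infty,\Ga}$, such a gradient bound comes for free from $|\na u|\le \nr\na_\Ga u\nr_{\infty,\Ga}+R$ on $\Ga$ together with subharmonicity of $|\na u|^2$. With only $L^2$ control on $\na_\Ga u$, this fails, and nothing in your outline provides an a priori bound on $\nr\na h\nr_{\infty,\Om}$ (or on $\nr\na u\nr_{\infty,\Om}$, or on $\de_\Ga(z)$) in terms of $N,d_\Om,r_i,r_e$ alone.

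This is precisely the extra ingredient that distinguishes Theorem~\ref{thm:stability-reverse-serrin} from Theorem~\ref{thm:prova} in the paper: one needs a separate elliptic-regularity argument for the Neumann problem (Proposition~\ref{prop:L^2 bound for oscillation} and Theorem~\ref{th:bound-derivatives}) to bound $\nr h-h_\Om\nr_{C^{1,\al}}$---and hence $\nr\na h\nr_{\infty,\Om}$---by a constant depending only on the geometry. With that bound in hand, Remark~\ref{rem:possibilestima de(z)} then controls $\de_\Ga(z)$, and Lemma~\ref{thm osc bound} can be applied. The $C^{2,\al}$ improvement for $N\ge4$ likewise requires the $m=2$ case of Theorem~\ref{th:bound-derivatives} to bound $\nr\na^2 h\nr_{\infty,\Om}$ before interpolating as in \cite[Corollary 4.3]{MP5}. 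Without these estimates your argument is incomplete for $N\ge 3$.
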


We shall now describe the key points of the proofs of Theorems \ref{thm:prova} and \ref{thm:stability-reverse-serrin} and highlight 
with the corresponding  results in classical Serrin problem.
\par
The first  step is to rewrite \eqref{fundamental-identity} conveniently, when $u$ also satisfies \eqref{neumann}.
In fact, in this case we obtain the following identity:
\begin{multline}
\label{neumann-identity-intro}
\int_\Om (\ol{u}-u)\,|\na^2 h|^2\,dx=\frac12 \int_\Ga |\na_\Ga u|^2 h_\nu\, dS_x +\\
\int_{\Ga} (\ol{u}-u)\bigl[(N-1)\,R H-N\bigr]h_\nu\,dS_x-\int_\Ga  (\ol{u}-u)\,\lan(\na\nu) \na_\Ga u,\na_\Ga u\ran\,dS_x.
\end{multline}

Here, $h=q-u$, where $q$ is given in \eqref{quadratic}. Also, $H$ is the mean curvature of $\Ga$ and $\na\nu$ is the Jacobian of a suitable extension to a neighborhood of $\Ga$ of the unit normal vector field $\nu$ on $\Ga$. Since up to suitable rotations $\na\nu$ depends on the principal curvatures of $\Ga$, we realize that each surface integral at the right-hand side of \eqref{neumann-identity-intro} contains a quantity which is locally defined on $\Ga$ and can be made small in terms of the relevant deviations used in  
Theorems \ref{thm:prova} and \ref{thm:stability-reverse-serrin}.
\par
In order to obtain the new version \eqref{neumann-identity-intro} from \eqref{fundamental-identity}, terms like $u_\nu$ or $u_{\nu\nu}$ should be replaced in \eqref{fundamental-identity}. 
The latter term pops up when we consider the first surface integral in \eqref{fundamental-identity}.
In order to see that, we extend the vector field $\nu$ to a tubular neighborhood of $\Ga$  and observe that $\na (u_\nu)=\lan\na (u_\nu),\nu\ran\,\nu$, since $\Ga$ is a level surface of $u_\nu$. This procedure leads to discover that 
$$
\bigl\lan (\na^2 u)\, \na u,\nu\bigr\ran=-\bigl\lan (\na \nu)\, \na_\Ga u, \na_\Ga u\bigr\ran+
u_\nu\,u_{\nu \nu} \ \mbox{ on } \ \Ga,
$$
since we also know that $(\na\nu)\, \nu=0$ on $\Ga$ (see Section \ref{sec:preliminary} for details).
\par
Furthermore, the term $u_{\nu\nu}$ is treated by using the well-known formula for the Laplace operator of a smooth function $v$ on a closed surface $\Si$: 
$$
\De v=v_{\nu\nu}+\De_\Si v+(N-1)\,H\,v_\nu.
$$
Here, $\De_\Si$ is the Laplace-Beltrami operator on $\Si$
and $H$ is the mean curvature of $\Si$.
(All these notations and computations are collected in Section \ref{sec:preliminary}.) Thus, since $u$ satisfies \eqref{torsion}  and \eqref{neumann}, we obtain that
$$
u_\nu\,u_{\nu \nu}=N\,R-(N-1)\,R^2 H-R\,\De_\Ga u \ \mbox{ on } \ \Ga.
$$
\par
Finally, if we want obtain the desired dependence on $\na_\Ga u$, we must use integration by parts on $\Ga$, i.e. apply the formula:
$$
\int_\Ga (\ol{u}-u)\,\De_\Ga u\,dS_x=\int_\Ga |\na_\Ga u|^2 dS_x.
$$
This last step and the pointwise analysis on $\Ga$ are peculiar of the reverse Serrin problem and were not needed in the classical counterpart.
\par
Since we assume that $u_\nu=R$ on $\Ga$ the remaining integrals in \eqref{fundamental-identity} are then easily treatable.
\par
Now, we focus our attention on the left-hand side of \eqref{fundamental-identity}. Notice that, if we choose $v=q$,  with $q$ as in \eqref{quadratic}, the second volume integral in \eqref{fundamental-identity} disappears (see also Theorem \ref{th:mother-identity}).
Hence, the left-hand side of \eqref{neumann-identity-intro} is easily obtained by observing that $|\na^2 h|^2=\De P$. 
The harmonic function $h$ was also used in the treatment of \eqref{stability-classical-serrin} and can be viewed as a deviation of $u$ from $q$, thus measuring how far the solution (and the domain) is from a spherical configuration. 
\par
Therefore, in order to obtain the estimate in Theorem \ref{thm:stability-reverse-serrin},
the plan is to show that the volume integral in \eqref{neumann-identity-intro}
is small provided the right-hand side of \eqref{neumann-identity-intro} becomes small in terms of some deviation of $u$ on $\Ga$ from a constant. 
\par
Notice that if that integral is zero, then $|\na^2 h|=0$, being as $\ol{u}-u>0$ by the strong maximum principle. 
Thus, $h$ is an affine function. As a consequence, we obtain in passing a fairly general rigidity result, which holds in the case of fairly general domains.


\begin{thm}
\label{th:quadratic-test}
Let $\Om \subset \RR^N$, $N \geq 2$, be a bounded domain with boundary $\Ga$ of class $C^2$.
Let $u\in C^2(\ol{\Om})$ be a solution of  \eqref{torsion}, \eqref{neumann}. 
\par
If the right-hand side of \eqref{neumann-identity-intro} is non-positive, then $h$ is affine, and hence $u$ is a quadratic polynomial. As a result, $\Om$ is a ball.
\end{thm}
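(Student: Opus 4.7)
The plan is to exploit the fixed sign of the left-hand side of identity \eqref{neumann-identity-intro} together with the assumption that the right-hand side is non-positive. Since $\De u = N > 0$, $u$ is strictly subharmonic; by the strong maximum principle it cannot attain its maximum at an interior point of $\Om$, hence $\ol{u} - u > 0$ in $\Om$. In particular, the integrand $(\ol{u}-u)\,|\na^2 h|^2$ appearing on the left-hand side of \eqref{neumann-identity-intro} is pointwise non-negative. Combining this with the hypothesis that the right-hand side is non-positive, both sides must vanish, and the strict positivity of $\ol{u}-u$ inside $\Om$ forces $\na^2 h \equiv 0$ there. Hence $h$ is an affine function.

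Since $h = q-u$ and $\na^2 q = I$, we deduce $\na^2 u \equiv I$ in $\Om$. Integrating twice, $u(x) = \tfrac12\,|x-z_0|^2 + C$ for some $z_0 \in \RR^N$ and $C \in \RR$, so that $\na u(x) = x - z_0$. The Neumann condition \eqref{neumann} therefore translates into the pointwise geometric identity
\begin{equation*}
\lan x - z_0,\,\nu(x)\ran = R \quad \text{for every } x \in \Ga.
\end{equation*}

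The main obstacle is to upgrade this identity to the conclusion $\Om = B_R(z_0)$. I would argue geometrically. First, rule out $z_0 \notin \Om$: if $z_0 \notin \ol{\Om}$, at a point $y_0 \in \Ga$ of minimum distance from $z_0$ the outward normal is $\nu(y_0) = (z_0-y_0)/|z_0-y_0|$, giving $\lan y_0 - z_0, \nu(y_0)\ran = -|y_0-z_0| < 0$, contradicting $R>0$; the case $z_0 \in \Ga$ is excluded since it would give $\na u(z_0) = 0$, violating $u_\nu = R$. Thus $z_0 \in \Om$. Next, pick points $x_e, x_i \in \Ga$ at maximum and minimum distance from $z_0$, respectively. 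At both points the outward normal is aligned with $x - z_0$, so the identity above forces $|x_e - z_0| = |x_i - z_0| = R$; that is, the circumscribed and inscribed radii of $\Om$ about $z_0$ both equal $R$. Since $|x - z_0|$ is sandwiched between these radii for every $x \in \Ga$, we conclude $|x - z_0| \equiv R$ on $\Ga$, whence $\Om = B_R(z_0)$.
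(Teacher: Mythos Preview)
Your proof is correct and follows essentially the same approach as the paper: the paper's argument invokes the strong maximum principle to get $\ol{u}-u>0$, concludes $\na^2 h\equiv 0$ from the sign condition, and then appeals to Lemma~\ref{lem:quadratic-test}(ii), whose proof is precisely your max/min distance argument on $\Ga$. Your additional care in locating $z_0$ inside $\Om$ is a welcome detail that the paper leaves implicit.
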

A version of this theorem for the solution of \eqref{torsion}, \eqref{dirichlet-homogeneous} was proved in \cite{MP2}. 
Of course, the condition on the sign of the right-hand side of \eqref{neumann-identity-intro} in Theorem \ref{th:quadratic-test} is satisfied if $u$ is also constant on $\Ga$.
\par
Finally, in order to treat the stability issue, one observes that the affine function $h$ in Theorem \ref{th:quadratic-test} can be further normalized to a constant by an appropriate choice of the center $z$ of the paraboloid $q$. With such a choice, if the volume integral \eqref{neumann-identity-intro} is small, then $h$ is close to a constant. Heuristically, this means that the oscillation of $h$ on $\Ga$ can be controlled by a weighted norm of $|\na^ 2 h|$. Now, since we have that
$$
\frac12\,(\rho^2_e-\rho^2_i)=\osc_\Ga q\le \osc_\Ga h+\osc_\Ga u,
$$
the desired estimate for $\rho_e-\rho_i$ can then be derived from one of the oscillation of $h$ on $\Ga$ in terms of the aforementioned weighted integral, which can be derived from some inequalities proved in \cite{MP5}. 
\par
The structure of this paper is as follows. In Section \ref{sec:preliminary}, we collect some preliminary formulas and results for functions defined on surfaces. Section \ref{sec:identities-rigidity} contains our general identity and its corollaries, together with rigidity results related to the reverse Serrin problem, including the proof of Theorem \ref{th:quadratic-test}. In Section \ref{sec:neumann-estimates}, we gather some estimates for the Neumann problem, which are instrumental to trace the dependence of the constant $c$ of Theorem \ref{thm:stability-reverse-serrin} on the relevant parameters. The proofs of Theorems \ref{thm:prova} and \ref{thm:stability-reverse-serrin} are contained in Section \ref{sec:stability}. Section \ref{sec:discussion reverse Serrin via Serrin} contains the alternative stability result for the Serrin reverse problem (Theorem~\ref{thm:Alternative-stability-reverse-serrin}) obtained via existing results for the classical Serrin problem.

\section{Notations and preliminary formulas}
\label{sec:preliminary}
Let $\Om\subset \RR^N$ be a bounded domain with boundary $\Ga$ of class $C^2$ and consider a function $v\in C^2(\ol{\Om})$.

We use 
the following decomposition for the gradient $\na v$ of $v$ on $\Ga$:
\begin{equation}
\label{gradient-decomposition}
\na v=v_\nu\,\nu+\na_{\Ga} v.
\end{equation}
Here, $\nu$ denotes the exterior unit normal vector field to $\Ga$. This decomposition defines what we shall call the \textit{tangential gradient} $\na_{\Ga} v$ of $v$ on $\Ga$. It is clear that, if $w$ is another function in $C^2(\ol{\Om})$, then
$$
\lan\na v, \na w\ran=v_\nu\,w_\nu+\lan\na_{\Ga} v, \na_{\Ga} w \ran.
$$
\par
We recall the well-known formula (see \cite{HP}):
\begin{equation}
	\label{reilly}
	\De v=v_{\nu\nu}+\De_{\Ga} v+(N-1)\,H\,v_\nu \ \mbox{ on } \ \Ga.
\end{equation}
Here, $\De_{\Ga}$ is the Laplace-Beltrami operator on $\Ga$ and $H$ denotes the \textit{mean curvature} of $\Ga$ defined by
$$
H=\frac1{N-1}\,\sum_{j=1}^{N-1} \ka_j,
$$ 
where $\ka_1, \dots, \ka_{N-1}$ are the principal curvatures of $\Ga$ with respect to the interior normal.
\par
We also recall from \cite{HP} the following formula of integration by parts on $\Ga$, which holds for any $v, w\in C^2(\ol{\Om})$:
\begin{equation}
\label{parts}
\int_{\Ga} \lan\na_{\Ga} v, \na_{\Ga} w \ran\,dS_x=-\int_{\Ga} v\,\De_{\Ga} w\,dS_x.
\end{equation}
In particular, we have that $\int_{\Ga} \De_{\Ga} w\,dS_x=0$.
\par
We can always extend the vector field $\nu$ smoothly to a tubular neighborhood of $\Ga$ in $\ol{\Om}$. For instance, we can set $\nu=-\eta\,\na\de_\Ga$, where $\de_{\Ga}$ is the \textit{distance function}, defined by
$$
\de_{\Ga}(x)=\dist(x,{\Ga}) \ \mbox{ for } \ x\in\ol{\Om},
$$
and $\eta$ is a cut-off function which equals $1$ in a neighborhood of $\Ga$.
In fact, we know from \cite{KP} that there is neighborhood $\cU$ of $\Ga$ in $\ol{\Om}$ such that $\de_\Ga\in C^{2}(\cU)$ and we have that 
$$
|\na\de_{\Ga}|=1 \ \mbox{ in } \ \cU.
$$
Hence, $\eta$ is chosen as a smooth function with support contained in the set in $\cU$ and such that $\eta\equiv 1$ near $\Ga$.
\par
At any point in $\Ga$, we can choose coordinates (see, e.g., \cite[Appendix 14.6]{GT}) so that
$$
\na\nu=-\na^2\de_{\Ga} =
\left[\begin{matrix}
\ka_1  &\cdots &0 &0 \\
\vdots &\ddots &\vdots  &\vdots \\
0  &\cdots &\ka_{N-1} &0 \\
0  &\cdots &0 &0
\end{matrix}
\right].
$$

\medskip

Throughout the rest of the paper,
unless explicitly indicated, $\Om$ is a bounded domain in $\RR^N$, $N\ge 2$, with boundary $\Ga$ of class $C^2$. Under this assumption, $\Om$ surely satisfies the \textit{uniform interior and exterior sphere conditions}, whose respective radii will be denoted by $r_i$ and $r_e$.
Namely, there exists $r_i>0$ (resp. $r_e>0$) such that for each $p \in \Ga$ it holds that $\ol{B} \cap \Ga = \{p\}$ for some ball $B\subset \Om$ (resp. $B \subset \RR^N \setminus\ol{\Om})$ of radius $r_i>0$ (resp. $r_e>0$).
As already mentioned, the two properties are equivalent to the $C^{1,1}$-regularity of $\Ga$, as shown in \cite{ABMMZ}. We denote by $d_\Om$ the diameter of $\Om$.
\par
The following lemma will be useful.

\begin{lem}
\label{lem:normal-modulus-gradient}
Let $u\in C^2(\ol{\Om})$ be a solution of \eqref{torsion}, and $c\in\RR$. It holds that
\begin{enumerate}[(i)]
\item
if $u=c$ on $\Ga$, then 
$$
\bigl\lan \na^2 u\, \na u,\nu\bigr\ran=u_\nu\,[N-(N-1)\,H\,u_\nu] \ \mbox{ on } \ \Ga;
$$
\item
if $u_\nu=c$ on $\Ga$, then
\begin{multline*}
\bigl\lan \na^2 u\, \na u,\nu\bigr\ran=-\lan \na\nu\, \na_\Ga u,\na_\Ga u\ran + u_\nu\,[N-\De_\Ga u-(N-1)\,H\,u_\nu] \ \mbox{ on } \ \Ga.
\end{multline*}
\end{enumerate}
\end{lem}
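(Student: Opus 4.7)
For both parts, the strategy is to combine the tangential/normal decomposition \eqref{gradient-decomposition} of $\na u$ on $\Ga$ with Reilly's identity \eqref{reilly}. The two cases differ only in which of the two terms in \eqref{gradient-decomposition} is killed by the boundary hypothesis.

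Part (i) is immediate. Since $u\equiv c$ on $\Ga$, we have $\na_\Ga u=0$, so \eqref{gradient-decomposition} reduces to $\na u=u_\nu\,\nu$ on $\Ga$, and hence $\lan \na^2 u\,\na u,\nu\ran=u_\nu\,\lan \na^2 u\,\nu,\nu\ran=u_\nu\,u_{\nu\nu}$. Moreover $\De_\Ga u\equiv 0$ on $\Ga$, so using $\De u=N$ in \eqref{reilly} we obtain $u_{\nu\nu}=N-(N-1)\,H\,u_\nu$ on $\Ga$, which yields (i).

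For part (ii), I first extend $\nu$ smoothly to a tubular neighborhood of $\Ga$ via the signed distance function as in Section \ref{sec:preliminary}, so that $u_\nu=\lan\na u,\nu\ran$ becomes a function defined in such a neighborhood. Differentiating it yields
$$
\na(u_\nu)=\na^2 u\,\nu+(\na\nu)^T\,\na u.
$$
On $\Ga$, the hypothesis $u_\nu\equiv c$ implies that the tangential gradient of $u_\nu$ vanishes, while the identity $(\na\nu)\,\nu=0$ on $\Ga$ (recalled in Section \ref{sec:preliminary}) tested against $\nu$ in the previous display gives $(u_\nu)_\nu=u_{\nu\nu}$; combining, one has $\na(u_\nu)=u_{\nu\nu}\,\nu$ on $\Ga$. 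Taking the inner product of the display above with $\na u$ and using the symmetry of the Hessian to rewrite $\lan\na^2 u\,\nu,\na u\ran=\lan\na^2 u\,\na u,\nu\ran$, I obtain
$$
\lan \na^2 u\,\na u,\nu\ran=u_\nu\,u_{\nu\nu}-\lan (\na\nu)\,\na u,\na u\ran\quad\mbox{on }\Ga.
$$
The last term simplifies: decomposing $\na u=u_\nu\,\nu+\na_\Ga u$, using again $(\na\nu)\,\nu=0$ and the symmetry of $\na\nu$, the mixed contributions vanish and one is left with $\lan (\na\nu)\,\na_\Ga u,\na_\Ga u\ran$. Plugging $u_{\nu\nu}=N-\De_\Ga u-(N-1)\,H\,u_\nu$ from \eqref{reilly} with $\De u=N$ concludes (ii).

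The only delicate point is the bookkeeping in (ii), where one must carefully exploit both the identity $(\na\nu)\,\nu=0$ on $\Ga$ and the symmetry of $\na\nu$. Both facts are guaranteed by the specific extension of $\nu$ via the distance function recalled in Section \ref{sec:preliminary}, which makes $\na\nu=-\na^2\de_\Ga$ symmetric with $\nu$ in its null space. No further obstacle is expected.
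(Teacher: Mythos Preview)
The proposal is correct and follows essentially the same approach as the paper: both use the distance-function extension of $\nu$, compute $\na(u_\nu)=\na^2 u\,\nu+(\na\nu)\,\na u$ (the transpose is irrelevant since $\na\nu=-\na^2\de_\Ga$ is symmetric), exploit that $u_\nu\equiv c$ forces $\na(u_\nu)$ to be purely normal on $\Ga$, take the inner product with $\na u$, and simplify via $(\na\nu)\,\nu=0$ together with \eqref{reilly}. Your organization is slightly different in that you first identify $(u_\nu)_\nu=u_{\nu\nu}$ and then dot with $\na u$, whereas the paper first writes the full vector identity $(\na^2 u)\,\nu+(\na\nu)\,\na_\Ga u=u_{\nu\nu}\,\nu$ and then dots with $\na u$; the content is the same.
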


\begin{proof}
(i) Since $\Ga$ is a level surface for $u$, by \eqref{gradient-decomposition} we infer that
$\na u=u_\nu\,\nu$, and hence
$$
\bigl\lan \na^2 u\, \na u,\nu\bigr\ran=u_\nu\,u_{\nu \nu}.
$$
Moreover, \eqref{reilly} with $v=u$ gives that
\begin{equation}
\label{u-nu-nu}
u_{\nu\nu}=
N-\De_{\Ga}u-(N-1)\,H\,u_\nu \ \mbox{ on } \ \Ga.
\end{equation}
Since $\De_{\Ga}u=0$ on $\Ga$, the desired identity ensues at once.
\par
(ii) We proceed as already described by extending $\nu$ to a neighborhood $\cU$.
The ensuing computations are generally performed in $\cU$, and hence evaluated on $\Ga$.
\par
Since $\Ga$ is a level surface of $\lan\na u,\nu\ran$, we have that
$$
\na \lan\na u,\nu\ran=\lan \na \lan\na u,\nu\ran, \nu\ran\,\nu
$$
on $\Ga$. Next, we compute that
\begin{multline*}
\na \lan\na u,\nu\ran=(\na^2 u)\, \nu+\na \nu\, \na u= \\
(\na^2 u)\, \nu+u_\nu (\na \nu)\,\nu+\na \nu\,\na_\Ga u= 
(\na^2 u)\, \nu+\na \nu\,\na_\Ga u.
\end{multline*}
Here, we have used that $(\na \nu)\,\nu=0$ in $\cU$, since $\nu$ is unitary. 
\par
Thus, we infer that
\begin{multline*}
(\na^2 u)\, \nu+(\na \nu)\,\na_\Ga u= \\
\bigl[u_{\nu \nu}+\lan(\na \nu)\,\na_\Ga u,\nu\ran\bigr] \nu= 
\bigl[u_{\nu \nu}+\lan(\na \nu)\,\nu,\na_\Ga u\ran\bigr] \nu=
u_{\nu \nu}\, \nu.
\end{multline*}
Hence, multiplying by $\na u$ gives that
\begin{multline*}
\bigl\lan (\na^2 u)\, \na u,\nu\bigr\ran= \\
-\bigl\lan (\na \nu)\, \na_\Ga u, \na u\bigr\ran+
u_\nu\,u_{\nu \nu}=-\bigl\lan (\na \nu)\, \na_\Ga u, \na_\Ga u\bigr\ran+
u_\nu\,u_{\nu \nu}.
\end{multline*}
Here, we have again used \eqref{gradient-decomposition} and $(\na\nu)\,\nu=0$. Therefore, the desired identity follows by \eqref{u-nu-nu}.
\end{proof}

\begin{rem}\label{rem: kappa min max}
{\rm
From the aforementioned properties of $\na\nu$, we easily infer that
$$
\ul{\ka}\,|\na_\Ga u|^2\le\bigl\lan (\na \nu)\, \na_\Ga u, \na_\Ga u\bigr\ran\le\ol{\ka}\,|\na_\Ga u|^2 \ \mbox{ on } \Ga, 
$$
where
$$
\ul{\ka}=\min\bigl[\ka_1, \dots, \ka_{N-1}\bigr] \mbox{ and } \ \ol{\ka}=\max\bigl[\ka_1, \dots, \ka_{N-1}\bigr]
$$
at each point of $\Ga$.
We also recall that
\begin{equation*}
-\frac1{r_e}\le\ul{\ka}\le\ol{\ka}\le\frac1{r_i} \ \mbox{ on } \ \Ga,
\end{equation*}
where $r_i$ and $r_e$ are the radii of the uniform interior and exterior sphere conditions.
}
\end{rem}

\section{Integral identities and  rigidity results}
\label{sec:identities-rigidity}
In this section, we shall derive the identities \eqref{fundamental-identity} and \eqref{neumann-identity-intro}. We will also prove Theorem \ref{th:quadratic-test}.

\begin{thm}[General identity]
\label{th:fundamental-identity}
Suppose $\Om\subset\RR^N$, $N\ge 2$, is a bounded domain with boundary $\Ga$ of class $C^2$.
Let $u, v\in C^2(\ol{\Om})$ be solutions of \eqref{torsion}.
\par
Set
$$
\ol{u}=\max_{\ol{\Om}} u=\max_{\Ga}u,
$$
and
\begin{equation*}
P=\frac12\,|\na u|^2+(\ol{u}-u) \ \mbox{ on }  \ \ol{\Om}.
\end{equation*}
Then the identity \eqref{fundamental-identity} holds, that is:
\begin{multline*}
\int_\Om (\ol{u}-u)\,\De P\,dx +  \int_\Om \lan (I - \na^2 v) \na u, \na u\ran \,dx =\\
\int_\Ga (\ol{u}-u)\,\lan\na^2 u\, \na u,\nu\ran\,dS_x +
\frac12\,\int_\Ga |\na u|^2 (u_\nu+ v_\nu)\,dS_x+ \\
-\int_\Ga \lan \na v,\na u \ran \,u_\nu\,dS_x 
-\int_\Ga (\ol{u}-u)\,u_\nu\,dS_x
+
N \int_\Ga (\ol{u}-u) (u_\nu - v_\nu) \,dS_x.    
\end{multline*}
\end{thm}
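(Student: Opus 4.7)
\medskip
\noindent\textbf{Proof plan.} The statement is purely a computation: combining a few divergence identities and applying the divergence theorem. I would not introduce any new concepts; instead, I would set up a single differential identity whose integration gives \eqref{fundamental-identity}.

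\emph{Step 1: Preliminary computations.} Since $\Delta u = N$ is constant, a direct calculation gives
\begin{equation*}
\nabla P = \nabla^2 u\, \nabla u - \nabla u,
\qquad
\Delta P = |\nabla^2 u|^2 - N,
\end{equation*}
which in particular yields
$\langle \nabla u,\nabla P\rangle = \langle \nabla^2 u\, \nabla u,\nabla u\rangle - |\nabla u|^2$
and
$P_\nu = \langle \nabla^2 u\, \nabla u,\nu\rangle - u_\nu$ on $\Gamma$. Combining the product rule for the divergence of $(\ol u - u)\nabla P$ with the obvious identity $\langle (I-\nabla^2 v)\nabla u,\nabla u\rangle = |\nabla u|^2 - \langle \nabla^2 v\,\nabla u,\nabla u\rangle$, the two volume integrands on the left-hand side of \eqref{fundamental-identity} add up to
\begin{equation*}
\mathrm{div}\bigl((\ol u-u)\nabla P\bigr) + \bigl\langle \nabla^2(u-v)\,\nabla u,\nabla u\bigr\rangle.
\end{equation*}

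\emph{Step 2: Writing the $v$-dependent term as a divergence.} Set $g = u-v$, which is harmonic in $\Omega$ since $\Delta u = \Delta v = N$. I would expand
$\mathrm{div}(\langle \nabla g,\nabla u\rangle\nabla u)$ using $\Delta u = N$, and observe that the second-order term
$\sum_{i,j} g_{ij}u_i u_j = \langle \nabla^2 g\,\nabla u,\nabla u\rangle$
appears, while the remainder
$\sum_j g_j (\sum_i u_{ij}u_i) = \tfrac{1}{2}\langle \nabla g,\nabla|\nabla u|^2\rangle$
can be written as a divergence via the harmonicity of $g$. This gives
\begin{equation*}
\bigl\langle \nabla^2 g\,\nabla u,\nabla u\bigr\rangle
= \mathrm{div}\!\left(\langle \nabla g,\nabla u\rangle\,\nabla u - N\, u\, \nabla g - \tfrac12 |\nabla u|^2\,\nabla g\right),
\end{equation*}
where I also used $\langle \nabla g,\nabla u\rangle = \mathrm{div}(u\,\nabla g)$ (again from $\Delta g=0$) to turn the $-N\langle \nabla g,\nabla u\rangle$ contribution into a divergence.

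\emph{Step 3: Applying the divergence theorem and bookkeeping.} Integrating the resulting pointwise identity over $\Omega$ and using the divergence theorem produces the five boundary integrals
\begin{equation*}
\int_\Gamma (\ol u-u) P_\nu\,dS_x
+ \int_\Gamma \langle \nabla g,\nabla u\rangle\, u_\nu\,dS_x
- N\!\int_\Gamma u\, g_\nu\,dS_x
- \tfrac12\!\int_\Gamma |\nabla u|^2\, g_\nu\,dS_x.
\end{equation*}
Substituting $P_\nu = \langle \nabla^2 u\,\nabla u,\nu\rangle - u_\nu$, $\nabla g = \nabla u - \nabla v$, and $g_\nu = u_\nu - v_\nu$, and regrouping the $|\nabla u|^2$ contributions, the claimed structure on the right-hand side of \eqref{fundamental-identity} emerges, except that the last integral appears as $-N\int_\Gamma u(u_\nu - v_\nu)\,dS_x$ rather than $N\int_\Gamma (\ol u - u)(u_\nu - v_\nu)\,dS_x$. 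To fix this, I would use the divergence theorem applied separately to $u$ and $v$: since $\Delta u = \Delta v = N$, both $\int_\Gamma u_\nu\,dS_x$ and $\int_\Gamma v_\nu\,dS_x$ equal $N|\Omega|$, so $\int_\Gamma (u_\nu - v_\nu)\,dS_x = 0$, which allows one to add the constant $\ol u$ freely. This completes the identification.

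The only real obstacle is the bookkeeping in Step~3, where many surface terms must be regrouped in exactly the right way; the observation $\int_\Gamma(u_\nu - v_\nu)\,dS_x = 0$, which accounts for the precise coefficient $\ol u - u$ in the last integral of \eqref{fundamental-identity}, is the key to reconciling the two forms.
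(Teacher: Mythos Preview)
Your argument is correct and follows essentially the same strategy as the paper: both write the left-hand side integrand as a pointwise divergence and then apply the divergence theorem. The only organizational difference is in the choice of vector field. The paper uses
\[
P\,\nabla u+(\ol u-u)\,\nabla P+\tfrac12\,|\nabla u|^2\,\nabla v-\langle\nabla v,\nabla u\rangle\,\nabla u+(N-1)(\ol u-u)\,\nabla u-N(\ol u-u)\,\nabla v,
\]
which carries the factor $(\ol u-u)$ throughout and thus lands directly on the stated right-hand side after integration. Your vector field, built via $g=u-v$, differs from this one by $N\,\ol u\,\nabla g$, which is divergence-free; this is exactly why you needed the additional observation $\int_\Gamma(u_\nu-v_\nu)\,dS_x=0$ at the end to insert the constant $\ol u$, a step the paper's packaging avoids. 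Either route is fine; yours makes the role of the harmonic function $g=u-v$ more visible, while the paper's avoids the final adjustment.
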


\begin{proof}
The proof proceeds by direct computations and an application of the divergence theorem. By simply taking derivatives and by straightforward algebraic manipulations, we first infer that the following key differential identity holds:
\begin{multline*}
(\ol{u}-u)\,\De P + \lan (I - \na^2 v) \na u, \na u\ran = \\
\dv\left\{P\, \na u+(\ol{u}-u)\,\na P+\frac12\,|\na u|^2\, \na v - \lan \na v,\na u \ran \,\na u\right\} 
\\
+\dv\left\{(N-1)\,(\ol{u}-u)\,\na u-N\,(\ol{u}-u)\,\na v\right\}.
\end{multline*} 
Thus, an application of the divergence theorem yields:
\begin{multline*}
    \int_\Om (\ol{u}-u)\,\De P\,dx +  \int_\Om \lan (I - \na^2 v) \na u, \na u\ran \,dx= \\
\int_{\Ga}\Bigl\{ P u_{\nu} +(\ol{u}-u)\, P_{\nu} + \frac12\,|\na u|^2\, v_{\nu} - \lan \na v, \na u \ran u_{\nu}\Bigr\}dS_x+ \\
\int_{\Ga}\bigl\{ (N-1)\,(\ol{u}-u) u_{\nu} - N\,(\ol{u}-u)\, v_{\nu}\bigr\}dS_x.
\end{multline*}
Hence, we infer that
\begin{multline*}
 \int_\Om (\ol{u}-u)\,\De P\,dx +  \int_\Om \lan (I - \na^2 v) \na u, \na u\ran \,dx= 
\int_{\Ga} \Bigl\{\frac12\,|\na u|^2+ (\ol{u}-u)\Bigr\} u_{\nu}\,dS_x +\\ 
\int_\Ga\Bigl\{ \lan \na^2 u\,\na u, \nu \ran- u_{\nu} \Bigr\}(\ol{u}-u)\,  dS_x+
 \int_{\Ga} \Bigl\{\frac12\,|\na u|^2\, v_\nu - \lan \na v, \na u\ran \,u_\nu\Bigr\} dS_x+\\
    \int_{\Ga} \bigl\{(N-1)\,(\ol{u}-u) u_{\nu} - N\,(\ol{u}-u)\,v_\nu\bigr\} dS_x.
\end{multline*}
\par
Identity  \eqref{fundamental-identity} then ensues by rearranging the terms.
\end{proof}

We now present a corollary of the previous result, when $v$ is a  quadratic solution $q$ of \eqref{torsion} as defined in \eqref{quadratic}.

\begin{cor}[Mother identity for the quadratic case]
\label{th:mother-identity}
Suppose $\Om\subset\RR^N$, $N\ge 2$, is a bounded domain with boundary $\Ga$ of class $C^2$.
\par
Let $u$, $\ol{u}$, and $P$ be as in Theorem \ref{th:fundamental-identity} and let $q$ be a quadratic polynomial of the form \eqref{quadratic}.
Then, the following identity holds:
\begin{multline}
\label{mother-identity}
\int_\Om (\ol{u}-u)\,\De P\,dx = \\
\frac12\,\int_\Ga |\na u|^2 \bigl(u_\nu+ q_\nu\bigr) dS_x
-\int_\Ga \lan \na q,\na u \ran \,u_\nu\,dS_x + \\
\int_\Ga (\ol{u}-u)\,\lan\na^2 u\, \na u,\nu\ran\,dS_x +\int_\Ga (\ol{u}-u)\bigl[(N-1)\,u_\nu-N\,q_\nu\bigr] dS_x.
\end{multline}
\par
An alternative formula is:
\begin{multline}
\label{mother-identity-2}
\int_\Om (\ol{u}-u)\,\De P\,dx = 
\frac12\,\int_\Ga u_\nu^2 \bigl(u_\nu- q_\nu\bigr) dS_x+ \\
\frac12\,\int_\Ga |\na_\Ga u|^2 \bigl(u_\nu+ q_\nu\bigr) dS_x
-\int_\Ga \lan \na_\Ga q,\na_\Ga u \ran \,u_\nu\,dS_x + \\
\int_\Ga (\ol{u}-u)\,\lan\na^2 u\, \na u,\nu\ran\,dS_x +\int_\Ga (\ol{u}-u)\bigl[(N-1)\,u_\nu-N\,q_\nu\bigr] dS_x.
\end{multline}
\end{cor}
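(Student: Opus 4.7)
The approach is to specialize Theorem~\ref{th:fundamental-identity} with $v=q$, for the quadratic polynomial $q=q^z$ given in \eqref{quadratic}, and then to manipulate the resulting surface integrals. Since $\na q(x) = x-z$ and $\na^2 q \equiv I$, the matrix $I - \na^2 q$ vanishes identically, so the second volume integral on the left-hand side of \eqref{fundamental-identity} disappears. Also, $q$ automatically satisfies $\De q = N$, as required. So the plan for \eqref{mother-identity} is essentially to plug in and group terms.

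More precisely, first I would substitute $v=q$ into \eqref{fundamental-identity}, dropping the $\int_\Om \lan (I-\na^2 v)\na u,\na u\ran\,dx$ term. Then I would combine the last two surface integrals in \eqref{fundamental-identity}: since
\begin{equation*}
-\int_\Ga(\ol{u}-u)\,u_\nu\,dS_x + N\int_\Ga(\ol{u}-u)(u_\nu - q_\nu)\,dS_x = \int_\Ga(\ol{u}-u)\bigl[(N-1)\,u_\nu - N\,q_\nu\bigr]\,dS_x,
\end{equation*}
the desired identity \eqref{mother-identity} follows immediately after a rearrangement. This step is a direct consequence of Theorem~\ref{th:fundamental-identity} with no additional input.

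For the alternative formulation \eqref{mother-identity-2}, the plan is to apply the tangential/normal decomposition \eqref{gradient-decomposition} to both $\na u$ and $\na q$ on $\Ga$, namely $\na u = u_\nu\,\nu + \na_\Ga u$ and $\na q = q_\nu\,\nu + \na_\Ga q$. This yields $|\na u|^2 = u_\nu^2 + |\na_\Ga u|^2$ and $\lan\na q, \na u\ran = q_\nu\,u_\nu + \lan\na_\Ga q, \na_\Ga u\ran$ on $\Ga$. Substituting these in the two surface integrals
\begin{equation*}
\frac12\int_\Ga|\na u|^2(u_\nu + q_\nu)\,dS_x - \int_\Ga\lan\na q,\na u\ran u_\nu\,dS_x
\end{equation*}
and expanding, the purely normal part produces $\frac12 u_\nu^2(u_\nu+q_\nu) - q_\nu u_\nu^2 = \frac12 u_\nu^2(u_\nu - q_\nu)$, while the mixed and tangential parts yield $\frac12|\na_\Ga u|^2(u_\nu+q_\nu) - \lan\na_\Ga q,\na_\Ga u\ran u_\nu$. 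Collecting these terms produces exactly \eqref{mother-identity-2}.

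There is no real obstacle here: the whole statement is just an algebraic corollary of Theorem~\ref{th:fundamental-identity} combined with the elementary observation that $q$ has constant Hessian and with the decomposition \eqref{gradient-decomposition}. The only point requiring minor care is the bookkeeping of signs when isolating the term $\frac12 u_\nu^2(u_\nu - q_\nu)$ in \eqref{mother-identity-2}, since this is what later allows one to exploit a Neumann condition like \eqref{neumann} to simplify the full identity further (as anticipated in the derivation of \eqref{neumann-identity-intro}).
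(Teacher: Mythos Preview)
Your proposal is correct and follows exactly the same route as the paper: specialize \eqref{fundamental-identity} with $v=q$, use $\na^2 q=I$ to kill the second volume integral, combine the last two surface terms to obtain \eqref{mother-identity}, and then apply the decomposition \eqref{gradient-decomposition} to pass to \eqref{mother-identity-2}. The paper's proof is terser, but your added bookkeeping (the explicit cancellation yielding $\frac12\,u_\nu^2(u_\nu-q_\nu)$) is accurate and matches what the paper leaves to the reader.
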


\begin{proof}
We apply \eqref{fundamental-identity} with $v=q$, since $q$ is clearly a solution of \eqref{torsion} on the whole $\RR^N$. We have that $\na^2 q=I$. 
As a result, the second volume integral at the left-hand side of \eqref{fundamental-identity} vanishes,
and hence \eqref{mother-identity} ensues. Formula \eqref{mother-identity-2} then simply follows by applying \eqref{gradient-decomposition} and rearranging some terms.
\end{proof}

\par
Next, we consider the case in which $u_\nu$ is constant on $\Ga$ with no constraint on the values of $u$ on $\Ga$. 

\begin{cor}[Constant Neumann condition]
\label{cor:locally-neumann}
Suppose $\Om\subset\RR^N$, $N\ge 2$, is a bounded domain with boundary $\Ga$ of class $C^2$.
\par
Let $q$ be as in \eqref{quadratic} and set $h=q-u$, where $u\in C^2(\ol{\Om})$ is a solution of \eqref{torsion}. 
Further,
suppose that $u$ satisfies \eqref{neumann}.
Then the identity \eqref{neumann-identity-intro} holds, that is:
\begin{multline*}
\int_\Om (\ol{u}-u)\,|\na^2 h|^2\,dx=\frac12 \int_\Ga |\na_\Ga u|^2 h_\nu\, dS_x +\\
\int_{\Ga} (\ol{u}-u)\bigl[(N-1)\,R H-N\bigr]h_\nu\,dS_x-\int_\Ga  (\ol{u}-u)\,\lan(\na\nu) \na_\Ga u,\na_\Ga u\ran\,dS_x.
\end{multline*}

\end{cor}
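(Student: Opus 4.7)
The plan is to start from the mother identity \eqref{mother-identity-2} and specialize it to the constant Neumann case $u_\nu = R$ on $\Ga$, matching the resulting boundary integrals term by term with those in \eqref{neumann-identity-intro}.

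First, I would rewrite the left-hand side. Since $\De u = N$ is constant, $\De(\tfrac12|\na u|^2) = |\na^2 u|^2$, so $\De P = |\na^2 u|^2 - N$. Because $\na^2 h = I - \na^2 u$ (recall $h=q-u$ with $\na^2 q = I$), expanding the square gives $|\na^2 h|^2 = N - 2\De u + |\na^2 u|^2 = |\na^2 u|^2 - N = \De P$.

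For the boundary terms in \eqref{mother-identity-2}, I substitute $u_\nu = R$ throughout and write $q_\nu = R + h_\nu$ where helpful. Two tools are key. Lemma \ref{lem:normal-modulus-gradient}(ii) rewrites
\[
\lan \na^2 u\,\na u,\nu\ran = -\lan \na\nu\,\na_\Ga u, \na_\Ga u\ran + R\bigl[N - \De_\Ga u - (N-1) R H\bigr],
\]
and the tangential Laplacian is then handled by \eqref{parts}: since $\ol{u}$ is constant, $\int_\Ga (\ol{u}-u)\,\De_\Ga u\,dS_x = \int_\Ga |\na_\Ga u|^2\,dS_x$. Expanding $\tfrac12|\na_\Ga u|^2(R + q_\nu) = R|\na_\Ga u|^2 + \tfrac12|\na_\Ga u|^2 h_\nu$, the $R|\na_\Ga u|^2$ contribution cancels the one just produced, leaving the desired $\tfrac12\int_\Ga |\na_\Ga u|^2 h_\nu\,dS_x$. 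The $\lan\na\nu\,\na_\Ga u, \na_\Ga u\ran$ term also surfaces with the correct sign and weight $(\ol{u}-u)$.

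The main obstacle is the term $-R\int_\Ga \lan\na_\Ga q,\na_\Ga u\ran dS_x$. I would dispatch it via the geometric identity $\De_\Ga q = (N-1)(1 - H q_\nu)$, which follows from \eqref{reilly} applied to $q$ (using $\De q = N$ and $q_{\nu\nu} = 1$). Integration by parts on $\Ga$ via \eqref{parts}, combined with $\int_\Ga \De_\Ga q\,dS_x = 0$ to introduce the constant $\ol{u}$, yields
\[
\int_\Ga \lan\na_\Ga q,\na_\Ga u\ran dS_x = \int_\Ga (\ol{u}-u)\,\De_\Ga q\,dS_x = (N-1)\int_\Ga (\ol{u}-u)\,dS_x - (N-1)\int_\Ga (\ol{u}-u) H q_\nu\,dS_x.
\]
After writing $Hq_\nu = HR + Hh_\nu$, the terms proportional to $\int_\Ga(\ol{u}-u)\,dS_x$ and to $\int_\Ga(\ol{u}-u)H\,dS_x$ cancel precisely against the remnants from Lemma \ref{lem:normal-modulus-gradient}(ii) and from the last integral of \eqref{mother-identity-2}, leaving the desired $(N-1)R\int_\Ga(\ol{u}-u)H h_\nu\,dS_x$, which combines with the surviving $-N\int_\Ga(\ol{u}-u) h_\nu\,dS_x$ into $\int_\Ga (\ol{u}-u)[(N-1)RH - N]h_\nu\,dS_x$. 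The only leftover is $-\tfrac12 R^2 \int_\Ga h_\nu\,dS_x$, which vanishes because $\De h = 0$ implies $\int_\Ga h_\nu\,dS_x = 0$ by the divergence theorem. Rearranging gives \eqref{neumann-identity-intro}.
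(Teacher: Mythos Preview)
Your proposal is correct and follows essentially the same approach as the paper's proof: both start from \eqref{mother-identity-2}, identify $\De P=|\na^2 h|^2$, use Lemma~\ref{lem:normal-modulus-gradient}(ii) together with \eqref{parts} to handle $\lan\na^2 u\,\na u,\nu\ran$ and $\De_\Ga u$, invoke $\De_\Ga q=(N-1)(1-Hq_\nu)$ from \eqref{reilly} to treat the $\lan\na_\Ga q,\na_\Ga u\ran$ term, and kill the residual $R^2$-term via $\int_\Ga h_\nu\,dS_x=0$. The only cosmetic difference is that you substitute $q_\nu=R+h_\nu$ earlier, while the paper carries $q_\nu$ a bit longer before collapsing terms.
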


\begin{proof}
We work on \eqref{mother-identity-2}. As already observed, we compute that $\De P=|\na^2 h|^2$.
Next, we compute the surface integrals at the right-hand side.
We begin with
$$
\int_\Ga u_\nu^2 \bigl(u_\nu- q_\nu\bigr) dS_x=R^2\int_\Ga \bigl(u_\nu- q_\nu\bigr) dS_x=0,
$$
thanks to the divergence theorem. Next, we compute that
\begin{multline*}
\frac12\,\int_\Ga |\na_\Ga u|^2 \bigl(u_\nu+ q_\nu\bigr) dS_x
-\int_\Ga \lan \na_\Ga q,\na_\Ga u \ran \,u_\nu\,dS_x= \\
\frac12\,\int_\Ga |\na_\Ga u|^2 \bigl(R+ q_\nu\bigr) dS_x-
R \int_{\Ga} (\ol{u}-u)\,\De_\Ga q\,dS_x.
\end{multline*}
Here, we used \eqref{neumann} and \eqref{parts}, in the second integral.
\par
Moreover, we infer that
\begin{multline*}
\int_{\Ga} (\ol{u}-u)\,\De_\Ga q\,dS_x= 
-(N-1)\int_{\Ga} (\ol{u}-u)\,H\,q_\nu\,dS_x +
(N-1)\,\int_\Ga (\ol{u}-u)\,dS_x.
\end{multline*}
This follows from the fact that
taking $v=q$ in \eqref{reilly} gives:
\begin{equation*}
\De_{\Ga} q=
(N-1)\,(1-H\,q_\nu\bigr) \ \mbox{ on } \ \Ga.
\end{equation*}
\par
Now, we apply item (ii) of Lemma \ref{lem:normal-modulus-gradient} and get:
\begin{multline*}
\int_\Ga (\ol{u}-u)\,\lan\na^2 u\, \na u,\nu\ran\,dS_x= 
-\int_\Ga  (\ol{u}-u)\,\lan(\na\nu) \na_\Ga u,\na_\Ga u\ran\,dS_x+ \\
R\int_{\Ga} (\ol{u}-u)\,
[N-\De_\Ga u-(N-1)\,R H]\,dS_x.
\end{multline*}
Moreover, thanks to \eqref{parts} we compute that
$$
\int_{\Ga} (\ol{u}-u)\,
\De_\Ga u\,dS_x=\int_{\Ga} |\na_\Ga u|^2 dS_x.
$$
Thus, we can write that
\begin{multline*}
\int_\Ga (\ol{u}-u)\,\lan\na^2 u\, \na u,\nu\ran\,dS_x= -\int_\Ga  (\ol{u}-u)\,\lan(\na\nu) \na_\Ga u,\na_\Ga u\ran\,dS_x+ \\
-(N-1)\,R^2\int_{\Ga} (\ol{u}-u)\,H\,dS_x
-R \int_{\Ga} |\na_\Ga u|^2 dS_x+
N\,R\int_\Ga (\ol{u}-u)\,dS_x.
\end{multline*}
\par
Summing up all these formulas then gives \eqref{neumann-identity-intro}, after straightforward algebraic manipulations.
\end{proof}

\par
We now present a quadratic-radial symmetry test. Its analogous for the solution of \eqref{torsion}, \eqref{dirichlet-homogeneous} essentially provides the spherical detector used in \cite{We, PS, BNST, MP1, MP2} (see also \cite[Lemma 1.9]{PogTesi} for additional details). Here, we rather focus on solutions of \eqref{torsion}, \eqref{neumann}. For comparison, we also provide (in item $(i)$) a version of its analogous for the solution of \eqref{torsion}, \eqref{dirichlet}.

\begin{lem}[Radial symmetry test]
\label{lem:quadratic-test}
Let $\Om$ be a bounded domain in $\RR^N$ with boundary $\Ga$ of class $C^2$.
\par
Let $u\in C^2(\ol{\Om})$ be a solution of \eqref{torsion} satisfying either \eqref{dirichlet} or \eqref{neumann}.
If $\De P\equiv 0$ in $\Om$, then $u$ is a quadratic polynomial and $\Ga$ is a sphere of radius $R$ given by \eqref{def-R}.
\end{lem}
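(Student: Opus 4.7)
The plan rests on the observation, already recorded in the introduction, that $\De P = |\na^2 u|^2 - (\De u)^2/N$. So I would first combine this with $\De u = N$ and the hypothesis $\De P \equiv 0$ to deduce $|\na^2 u|^2 \equiv N$; then the symmetric matrix $A = \na^2 u$ realizes equality in the Cauchy-Schwarz bound $(\tr A)^2 \le N\,|A|^2$ at every point of $\Om$, which forces $\na^2 u \equiv I$. Two integrations give
$$
u(x) = \frac12\,|x-z|^2 + a
$$
for some $z \in \RR^N$ and $a \in \RR$. Up to this point neither boundary condition has been used.

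For the Dirichlet case \eqref{dirichlet}, the condition $u \equiv u_0$ on $\Ga$ becomes $|x-z|^2 \equiv 2(u_0-a)$ on $\Ga$, so each connected component of $\Ga$ lies in a common sphere $\{|x-z| = \rho\}$ with $\rho = \sqrt{2(u_0-a)}$. Being both closed and (as a $C^2$ hypersurface of the correct dimension) open in this sphere, each component must equal the whole sphere; disjointness of the components then forces $\Ga$ to consist of a single one. Hence $\Om$ is a ball of radius $\rho$, and the definition \eqref{def-R} immediately identifies $\rho = R$.

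The Neumann case \eqref{neumann} requires one extra clean idea. Since $\na u = x-z$, the condition $u_\nu = R$ reads $\lan x-z, \nu\ran \equiv R$ on $\Ga$. I would introduce $g = |x-z|^2 - R^2$ on $\Ga$ and compute, via \eqref{gradient-decomposition}, that $\na_\Ga g = 2(x-z) - 2R\,\nu$; squaring and using $\lan x-z,\nu\ran = R$ yields the eikonal-type identity
$$
|\na_\Ga g|^2 = 4\,g \quad \mbox{on } \Ga.
$$
At any minimum of $g$ on the compact set $\Ga$ we have $\na_\Ga g = 0$, so $g = 0$ there; likewise at any maximum. Consequently $\min_\Ga g = \max_\Ga g = 0$, so $g \equiv 0$, i.e.\ $|x-z| \equiv R$ on $\Ga$. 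The same connectedness argument as in the Dirichlet case then concludes that $\Ga$ is the sphere of radius $R$ centered at $z$.

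I expect the only conceptually nontrivial step to be the eikonal identity $|\na_\Ga g|^2 = 4g$ together with the max/min trick that upgrades constancy of the support function (relative to $z$) into full spherical symmetry; everything else amounts to Cauchy-Schwarz rigidity for $\na^2 u$ and a standard connectedness observation for level sets on $\Ga$.
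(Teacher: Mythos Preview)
Your proof is correct and follows essentially the same route as the paper. The first part (Cauchy--Schwarz rigidity forcing $\na^2 u\equiv I$) is identical, and in the Neumann case your eikonal identity $|\na_\Ga g|^2=4g$ together with the min/max trick is just a repackaging of the paper's observation that at the extremal points $x^m,x^M$ of $|x-z|$ on $\Ga$ the vector $x-z$ is parallel to $\nu$, whence $|x-z|=|\lan x-z,\nu\ran|=R$ there; the two arguments are logically equivalent. Your connectedness remark for $\Ga$ is a detail the paper leaves implicit.
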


\begin{proof}
We know that
$$
\De P=|\na^2 u|^2 - \frac{(\De u)^2}{N} = |\na^2 u|^2  - \frac{\lan \na^2 u, I\ran^2}{N}\ge 0.
$$
Here, $\lan \cdot,\cdot \ran$ denotes the inner product in $\RR^{N^2}$ and hence the last inequality follows by the Cauchy-Schwarz inequality. Thus, the fact that $\De P\equiv 0$ gives that the $N^2$-vectors $\na^2 u$ and $I$ coincide in $\Om$, being as \eqref{torsion} in force. 
Therefore, $u$ is a quadratic polynomial of the form \eqref{quadratic}
for some $z\in\RR^N$ and $a\in\RR$ (see also \cite{MP1, MP2} and \cite[Lemma~1.9]{PogTesi}). 
\par
(i) If \eqref{dirichlet} holds then we have that $|x-z|^2=2\,(u_0-a)$ for every $x\in\Ga$. Hence, $\Ga$ must be a sphere of radius $\sqrt{2\,(u_0-a)}$ and we have that $x-z=\sqrt{2\,(u_0-a)}\,\nu(x)$. Thus, we obtain that
\begin{multline*}
2\,(u_0-a)\,|\Ga|=\int_\Ga |x-z|^2 dS_x= \\
\sqrt{2\,|u_0-a|} \int_\Ga \lan x-z,\nu(x) \ran \,dS_x=
N\, |\Om|\,\sqrt{2\,|u_0-a|}.
\end{multline*}
Consequently, \eqref{def-R} gives that $2\,(u_0-a)=R^2$, 
$$
u(x)=\frac12\,(|x-z|^2-R^2)+u_0,
$$
and $\Ga$ is a sphere centered at $z$ of radius $R$.
\par
(ii) If \eqref{neumann} holds, since
$u$ is a quadratic polynomial as in \eqref{quadratic},
we infer that
\begin{equation}
\label{Gamma-j-constraint}
\lan x-z, \nu(x) \ran=u_\nu(x) = R \ \ \mbox{ for } \  x \in \Ga.
\end{equation}
\par
Next, let $x^m$  and $x^M$ be points in $\Ga$ such that
$$
|x^m-z|=\min_{x\in\Ga}|x-z| \ \mbox{ and } \ |x^M-z|=\max_{x\in\Ga}|x-z|.
$$
It is clear that $x^m-z$ and $x^M-z$ are parallel to $\nu(x^m)$ and $\nu(x^M)$.  Hence, \eqref{Gamma-j-constraint} gives that  $|x^m-z|=|x^M-z|=R$, i.e. $|x-z|=R$ for every $x\in\Ga$.
Thus, again, $\Ga$ must be the sphere centered at $z$ of radius $R$.
\end{proof}

A consequence of item (ii) of this lemma and Corollary \ref{cor:locally-neumann} is Theorem \ref{th:quadratic-test}, the general rigidity result stated in the Introduction.

\begin{proof}[Proof of Theorem \ref{th:quadratic-test}]
The assumption on the right-hand side of \eqref{neumann-identity-intro} forces the function $|\na^2 h|^2=\De P$ to be identically zero, being as $\ol{u}-u>0$ by the strong maximum principle. Our claim then follows from Lemma \ref{lem:quadratic-test}.
\end{proof}

\section{Estimates for solutions of the Neumann problem}

\label{sec:neumann-estimates}
In this section, we derive some geometric and spectral estimates for solutions of \eqref{torsion}, \eqref{neumann}, which will be useful to trace the dependence of the constant $c$ in Theorems \ref{thm:prova} and \ref{thm:stability-reverse-serrin}.

We begin with a geometric inequality that adapts \cite[Lemma 3.1]{MP2}, obtained for the solution of \eqref{torsion}, \eqref{dirichlet}.

\begin{lem}
\label{lemma geo bound -u > de^2}
Let $\Om$ be a bounded domain in $\RR^N$.  Let $u\in C^2(\ol{\Om})$ be a solution of \eqref{torsion}.
Then, it holds that
\begin{equation*}
    \ol{u} - u(x) \ge \frac12\, \de_\Ga(x)^2 \ \mbox{ for every } \ x \in\ol{\Om}.
\end{equation*}
Moreover, if $\Ga$ is of class $C^1$ and satisfies the uniform interior sphere condition with radius $r_i$, then 
\begin{equation*}
    \ol{u} - u(x) \ge \frac12\,r_i\,\de_\Ga(x) \ \mbox{ for every } \ x \in \ol{\Om}.
\end{equation*}
\end{lem}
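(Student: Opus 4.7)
The plan is to establish both bounds by comparing $u$ with appropriate quadratic barriers that solve $\De q=N$, exploiting that $u$ is subharmonic (being as $\De u=N>0$) and hence $u\le \ol{u}$ on $\ol{\Om}$ by the strong maximum principle.

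For the first estimate, I would fix $x_0\in\Om$ and consider the auxiliary function
\[
\psi(x)=u(x)-\tfrac12\,|x-x_0|^2.
\]
Since $\tfrac12|x-x_0|^2$ is itself a solution of \eqref{torsion} on $\RR^N$, the function $\psi$ is harmonic in $\Om$. The maximum principle then yields $\psi(x_0)\le \max_\Ga \psi$, and combining $u\le \ol{u}$ on $\Ga$ with $|x-x_0|\ge \de_\Ga(x_0)$ for every $x\in\Ga$ gives
\[
u(x_0)=\psi(x_0)\le \max_\Ga u+\max_\Ga\Bigl(-\tfrac12\,|x-x_0|^2\Bigr)\le \ol{u}-\tfrac12\,\de_\Ga(x_0)^2,
\]
which is the first claim.

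For the second, sharper estimate near $\Ga$, I would use a barrier on an interior tangent ball supplied by the uniform interior sphere condition. Given $x\in\ol{\Om}$, pick a closest point $y\in\Ga$, and let $B_{r_i}(z)\subset \Om$ be an interior ball tangent to $\Ga$ at $y$, with center $z=y-r_i\,\nu(y)$. On $\ol{B_{r_i}(z)}$ set
\[
v(\xi)=\tfrac12\,\bigl(|\xi-z|^2-r_i^2\bigr)+\ol{u}.
\]
Then $\De v=N$ in $B_{r_i}(z)$ and $v=\ol{u}\ge u$ on $\pa B_{r_i}(z)$, so comparison gives $u\le v$ on $\ol{B_{r_i}(z)}$. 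When $\de_\Ga(x)\le r_i$, the $C^1$ regularity of $\Ga$ forces $x-y$ to be parallel to $\nu(y)$ (first-order optimality of $y$ for the distance from $x$ restricted to $\Ga$), so $x$ sits on the segment $[y,z]$ with $|x-z|=r_i-\de_\Ga(x)$; substituting yields
\[
\ol{u}-u(x)\ge \de_\Ga(x)\Bigl(r_i-\tfrac12\,\de_\Ga(x)\Bigr)\ge \tfrac12\,r_i\,\de_\Ga(x).
\]
When instead $\de_\Ga(x)>r_i$, the first estimate already provides $\ol{u}-u(x)\ge \tfrac12\,\de_\Ga(x)^2\ge \tfrac12\,r_i\,\de_\Ga(x)$.

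The only delicate point is the geometric observation that, under the $C^1$ regularity of $\Ga$ and the interior sphere condition, any nearest boundary point $y$ of a given $x\in\ol{\Om}$ satisfies $x-y\parallel\nu(y)$, placing $x$ on the inner normal segment $[y,z]$ whenever $\de_\Ga(x)\le r_i$; this is classical and follows from first-order optimality of the squared distance along $\Ga$ together with $C^1$ smoothness of $\Ga$.
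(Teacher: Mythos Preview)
Your argument is correct. The paper proceeds differently: it introduces the Dirichlet torsion function $f$ solving $\De f=N$ in $\Om$, $f=0$ on $\Ga$, observes by comparison that $\ol{u}-u\ge -f$ on $\ol{\Om}$, and then invokes \cite[Lemma~3.1]{MP2} for the two bounds on $-f$. Your approach bypasses this reduction and applies the quadratic barriers directly to $u$, which is more self-contained and in fact slightly more robust for the first inequality (no boundary regularity is assumed there, so the existence of a classical Dirichlet solution $f$ is not entirely automatic). The underlying mechanism---comparison with paraboloids centered at $x_0$ for the quadratic bound, and with paraboloids centered at the center of an interior tangent ball for the linear bound---is the same as what lies behind \cite[Lemma~3.1]{MP2}; you have simply transplanted it from $-f$ to $\ol{u}-u$ without the intermediate step. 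The paper's route is shorter to write because it cites out, while yours makes the proof independent of \cite{MP2}.
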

\begin{proof}
Let us define by $f$ the unique solution of 
\begin{equation*}
    \De f=N \ \mbox{ in } \ \Om, 
    \qquad
    f = 0 \ \text{ on } \ \Ga.
\end{equation*}
Then, by comparison $\ol{u} - u \ge -f$ in $\ol{\Om}$, and the conclusion follows by \cite[Lemma 3.1]{MP2}.
\end{proof}

\medskip

We shall now derive a priori estimates on the harmonic function $h=q-u$ and its derivatives, 
in terms of geometric, spectral, and regularity parameters of the domain $\Om$. 
\par
In what follows, we shall use the fact that a function $u$ satisfying \eqref{torsion} and \eqref{neumann} minimizes the functional $\cE: W^{1,2}(\Om)\to\RR$ defined by 
$$
\cE(v) = \frac12 \int_\Om |\na v|^2\,dx + N \int_\Om v \,dx - R \int_\Ga v \,dS_x,
$$
for any $v\in W^{1,2}(\Om)$.
In the following result, $\nu_2(\Om)$, $\si_2(\Om)$, 
denote the second (non-trivial) Neumann and Steklov eigenvalues. Respectively, they are defined by
$$
\nu_2(\Om)=\min\left\{\int_\Om |\na v|^2 dx: v\in W^{1,2}(\Om), \int_\Om v^2 dx=1, \int_\Om v\,dx=0\right\}.
$$
and
$$
\si_2(\Om)=\min\left\{\int_\Om |\na v|^2 dx: v\in W^{1,2}(\Om), \int_\Ga v^2 dS_x=1, \int_\Ga v\,dS_x=0\right\}.
$$

\begin{prop}[$L^2$ bound for oscillation]\label{prop:L^2 bound for oscillation}
Let $\Om$ be a bounded domain in $\RR^N$ with boundary $\Ga$ of class $C^2$. 
Let $u\in C^2(\ol{\Om})$ be a solution of \eqref{torsion}, \eqref{neumann} and set $h=q-u$, with $q$ given by \eqref{quadratic}.
Then, it holds that
\begin{equation}
\label{bound-L2}
\nr h-h_\Om\nr_{2,\Om}\le \frac{2\,\nr R-q_\nu\nr_{2,\Ga}}{\sqrt{\nu_2(\Om)\,\si_2(\Om)}}.
\end{equation}
Here, $h_\Om$ denotes the mean value of $h$ on $\Om$.
\end{prop}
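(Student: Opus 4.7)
The plan rests on the observation that $h=q-u$ is harmonic in $\Omega$ (since $\Delta q = \Delta u = N$) and carries Neumann data $h_\nu = q_\nu - R$ on $\Gamma$. Crucially, this Neumann datum has zero mean on $\Gamma$: by the divergence theorem applied to $q$ one has $\int_\Gamma q_\nu\,dS_x = \int_\Omega \Delta q\,dx = N|\Omega|$, which equals $R|\Gamma|$ by the definition \eqref{def-R} of $R$. Hence $\int_\Gamma h_\nu\,dS_x = 0$, and the standard compatibility condition for the Neumann problem holds.

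The strategy is then a short chain of sharp inequalities in four steps. First, applying the variational characterization of the second Neumann eigenvalue to the zero-mean function $h - h_\Omega$ yields
\begin{equation*}
\|h - h_\Omega\|_{2,\Omega} \,\le\, \nu_2(\Omega)^{-1/2}\,\|\nabla h\|_{2,\Omega}.
\end{equation*}
Second, Green's identity combined with $\Delta h = 0$ and the zero-mean property of $h_\nu$ gives
\begin{equation*}
\int_\Omega |\nabla h|^2\,dx \,=\, \int_\Gamma h\,h_\nu\,dS_x \,=\, \int_\Gamma (h - h_\Gamma)\,h_\nu\,dS_x,
\end{equation*}
where $h_\Gamma$ denotes the mean of $h$ on $\Gamma$. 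Third, Cauchy--Schwarz on $\Gamma$ gives
\begin{equation*}
\int_\Gamma (h - h_\Gamma)\,h_\nu\,dS_x \,\le\, \|h - h_\Gamma\|_{2,\Gamma}\,\|h_\nu\|_{2,\Gamma}.
\end{equation*}
Fourth, the variational characterization of the second Steklov eigenvalue applied to $h - h_\Gamma$ (which has zero boundary mean) yields $\|h - h_\Gamma\|_{2,\Gamma} \le \sigma_2(\Omega)^{-1/2}\,\|\nabla h\|_{2,\Omega}$. Cancelling one factor of $\|\nabla h\|_{2,\Omega}$ and using $\|h_\nu\|_{2,\Gamma} = \|q_\nu - R\|_{2,\Gamma}$ then feeds back into the first estimate to produce the claimed bound (in fact without the factor $2$, which the statement seems to include as a safety margin).

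There is no real obstacle: each step is standard once the right objects are identified. The only care needed is in verifying the compatibility condition $\int_\Gamma h_\nu = 0$, which enables replacing $h$ by $h - h_\Gamma$ in the boundary integral and thereby unlocks the Steklov estimate; otherwise one would only control $\|h\|_{2,\Gamma}$, which is not scale-invariant under addition of constants and cannot be bounded by $\|\nabla h\|_{2,\Omega}$ alone.
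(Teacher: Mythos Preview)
Your proof is correct and follows essentially the same chain of inequalities as the paper: Poincar\'e via $\nu_2(\Omega)$, a boundary representation of $\int_\Omega |\nabla h|^2$, Cauchy--Schwarz on $\Gamma$, and the Steklov bound via $\sigma_2(\Omega)$. The one noteworthy difference is in step two: the paper derives the boundary representation from the minimality of $h$ for the functional $v\mapsto \tfrac12\int_\Omega |\nabla v|^2 - \int_\Gamma v(q_\nu-R)$, comparing against $v\equiv 1$, which yields only the inequality $\int_\Omega |\nabla h|^2 \le 2\int_\Gamma h(q_\nu-R)$. Your use of Green's identity gives the exact equality $\int_\Omega |\nabla h|^2 = \int_\Gamma h\,h_\nu$, and this is precisely why you obtain the bound without the factor $2$---your observation that the $2$ is superfluous is correct.
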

\begin{proof}
Since $h$ is harmonic in $\Om$ and $h_\nu=q_\nu-R$ on $\Ga$, it is clear that $h$ minimizes in $W^{1,2}(\Om)$ the functional defined by
$$
W^{1,2}(\Om)\ni v\mapsto\frac12 \int_\Om |\na v|^2\,dx -\int_\Ga v\, (q_\nu-R)\,dS_x.
$$
By taking $v\equiv 1$, the minimality of $h$ then gives that
$$
\int_\Om |\na h|^2\,dx\le 2\,\int_\Ga h\, (q_\nu-R)\,dS_x=2\,\int_\Ga (h-h_\Ga)\, (q_\nu-R)\,dS_x,
$$
where $h_\Ga$ is the mean value of $h$ on $\Ga$. Here, we used that $q_\nu-R$ has a zero mean value on $\Ga$. The Cauchy-Schwarz inequality and the definition of $\si_2(\Om)$  then give that
$$
\int_\Om |\na h|^2\,dx\le 2\,\nr q_\nu-R\nr_{2,\Ga} \nr h-h_\Ga\nr_{2,\Ga} \le
\frac{2\,\nr q_\nu-R\nr_{2,\Ga}}{\sqrt{\si_2(\Om)}}\,\nr \na h\nr_{2,\Om},
$$  
and hence
$$
\nr h-h_\Om\nr_{2,\Om}^2\le \nu_2(\Om)^{-1} \int_\Om |\na h|^2\,dx\le \frac{4\,\nr q_\nu-R\nr_{2,\Ga}^2}{\nu_2(\Om)\,\si_2(\Om)},
$$
by the Poincar\'e inequality. Thus, \eqref{bound-L2} ensues.
\end{proof}

\begin{rem}\label{rem:uniform Poincare inequalities}
{\rm 
Thanks to \cite[Theorems 1 and 2]{BC}, we have that, within the class of uniformly bounded and uniformly Lipschitz domains, $\nu_2(\Om)$ and $\si_2(\Om)$ are bounded from below by a positive constant independent of $\Om$.
} 
\end{rem}

\begin{thm}[Uniform bound for the derivatives of $h$]
\label{th:bound-derivatives}
Set $m=1$ or $m=2$.
Let $\Om$ be a bounded domain in $\RR^N$ of class $C^{m,\al}$, $0 < \al < 1$.
Let $u$ be a solution of \eqref{torsion}, \eqref{neumann} and set $h=q-u$, with $q$ given by \eqref{quadratic}. Then, there exists a constant $C$ that depends on $N$, $d_\Om$, and the $C^{m,\al}$ regularity of $\Ga$, such that
$$
\nr h-h_\Om\nr_{m,\al;\,\Om}\le C ,
$$
were $\nr \cdot \nr_{m,\al;\,\Om}$ denotes the norm in $C^{m,\al}(\Om)$.
\end{thm}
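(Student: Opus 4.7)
The plan is to view $h = q-u$ as a harmonic function in $\Om$ (since $\De q = N = \De u$) carrying the Neumann datum $h_\nu = q_\nu - R$ on $\Ga$. This datum has zero mean, because the divergence theorem gives $\int_\Ga q_\nu\,dS_x = N|\Om| = R\,|\Ga| = \int_\Ga R\,dS_x$, so $h - h_\Om$ is the unique solution of a well-posed Neumann problem for the Laplacian. The conclusion will then follow from the classical boundary Schauder estimate
\begin{equation*}
\nr h - h_\Om\nr_{m,\al;\,\Om} \le C\,\bigl(\nr h_\nu\nr_{m-1,\al;\,\Ga} + \nr h - h_\Om\nr_{2,\Om}\bigr),
\end{equation*}
where $C$ depends only on $N$, $d_\Om$, and the $C^{m,\al}$ character of $\Ga$; the lower-order summand may be chosen in $L^2$ because $h$ is harmonic, so interior estimates convert any integral norm into the required one modulo constants.

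First, I would bound the boundary datum. The pointwise identity $q_\nu(x) = \lan x-z,\nu(x)\ran$ on $\Ga$, together with the implicit assumption $z\in\ol\Om$ (as dictated by the applications, e.g.\ $z$ being the minimum point of $u$), gives $|x-z|\le d_\Om$ for $x\in\Ga$; hence the $C^{m-1,\al}(\Ga)$ norm of $q_\nu$ is controlled by $d_\Om$ times the $C^{m-1,\al}(\Ga)$ norm of $\nu$, and the latter is built into the $C^{m,\al}$ regularity of $\Ga$. The constant $R = N|\Om|/|\Ga|$ is itself controlled by $d_\Om$ alone, since the isoperimetric inequality yields $R\le c_N\,|\Om|^{1/N} \le c_N\,d_\Om$. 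Consequently, $\nr q_\nu - R\nr_{m-1,\al;\,\Ga}$ admits a bound of the required form.

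Next, the $L^2$ seed norm is handled by Proposition \ref{prop:L^2 bound for oscillation}, which gives
\begin{equation*}
\nr h - h_\Om\nr_{2,\Om} \le \frac{2\,\nr R - q_\nu\nr_{2,\Ga}}{\sqrt{\nu_2(\Om)\,\si_2(\Om)}}.
\end{equation*}
The numerator is at most $|\Ga|^{1/2}$ times the uniform norm of $R - q_\nu$ on $\Ga$, both of which are already controlled by the previous step. The denominator is bounded from below by a positive constant depending only on $N$, $d_\Om$, and the Lipschitz character of $\Ga$, thanks to Remark \ref{rem:uniform Poincare inequalities}.

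The principal technical obstacle is verifying that the Schauder constant $C$ in the first display depends only on the declared parameters. This is classical but not entirely automatic: one covers $\ol\Om$ by finitely many boundary charts in which $\Ga$ is a $C^{m,\al}$ graph with uniformly controlled norm, combines interior Schauder estimates with boundary Schauder estimates for the half-space Neumann Laplacian, and then patches them together, with the covering cardinality and the flattening diffeomorphisms governed by $d_\Om$ and the $C^{m,\al}$ regularity. Assembling this with the two ingredients above yields the claimed bound.
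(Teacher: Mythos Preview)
Your proposal follows essentially the same route as the paper: recognize that $h$ is harmonic with Neumann datum $h_\nu=q_\nu-R$, invoke a Schauder estimate, bound the boundary datum $\|q_\nu-R\|_{m-1,\al;\Ga}$ geometrically, and feed in Proposition~\ref{prop:L^2 bound for oscillation} together with Remark~\ref{rem:uniform Poincare inequalities} for the $L^2$ seed.

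The one place where the paper is more explicit is precisely the step you pass over quickly. You write the Schauder estimate directly with an $L^2$ seed and justify this by saying that ``interior estimates convert any integral norm into the required one.'' Interior estimates alone do not suffice, since they degenerate near $\Ga$. The paper instead starts from the standard Schauder estimate with the $\sup$ norm as seed,
\[
\|h-h_\Om\|_{m,\al;\Om}\le c\Bigl(\max_{\ol\Om}|h-h_\Om|+\|q_\nu-R\|_{m-1,\al;\Ga}\Bigr),
\]
and then uses an interpolation inequality (valid up to the boundary, from \cite{MP6}) of the form
\[
\max_{\ol\Om}|h-h_\Om|\le c\Bigl(\si^{-N/2}\|h-h_\Om\|_{2,\Om}+\si\,\|\na h\|_{\infty,\Om}\Bigr)
\]
for small $\si>0$. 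Choosing $\si$ small enough allows the $\|\na h\|_{\infty,\Om}$ term to be absorbed back into the left-hand side, which is exactly the mechanism producing your $L^2$-seed estimate. Your plan is correct in spirit, but this absorption argument is the missing rigorous content behind the sentence you flagged.
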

\begin{proof}
For simplicity of notation, in this proof by the same letter $c$ we shall denote a generic constant, whose dependence will be specified if needed.
\par
We have that $h$ is harmonic in $\Om$ and $h_\nu=q_\nu-R$ on $\Ga$. We thus can apply a standard a priori estimate for the Neumann problem  (see e.g., \cite[Chapter 7]{Li}; see also\cite[Theorem 6.30]{GT} for $m=2$) to obtain that
\begin{equation}
\label{eq:standard elliptic regularity}
\|h\|_{m,\al;\,\Om} \le c \left(\max_{\ol{\Om}}|h-h_\Om|+ \|q_\nu-R\|_{m-1,\al;\,\Ga}\right).
\end{equation}
Here, $c$ depends on $N, m, \al$, $d_\Om$, and the $C^{m,\al}$ regularity of $\Ga$. 
By arguing as in \cite[Lemma 4.9]{MP6} (see also \cite{MPchapter}) we can find a  constant $c$ such that
\begin{equation}
\label{eq:stima interpolatoria PogTesi}
\max_{\ol{\Om}} |h-h_\Om| = \max_{\Ga} | h - h_\Om | \le c \left\lbrace \frac{\nr h-h_\Om \nr_{2,\Om}}{\si^{N/2}} + \si \,  \nr \na h \nr_{\infty, \Om }   \right\rbrace ,
\end{equation}
for any $0< \si < \si_0$. Here, $c$ and $\si_0$ only depend on  $N$, $d_\Om$, and the $C^{m,\al}$-regularity of $\Ga$.\footnote{This argument gives a constant depending on $N$ and the parameters related to a uniform interior cone condition. This condition is equivalent to a Lipschitz-type regularity of $\Ga$ (see \cite{MP6} for details). Of course, the relevant parameters can be bounded in terms of the $C^{1,\al}$ regularity of $\Ga$.}
\par
Now, 
if we choose $\si= \min\{1/(2 c^2) ,\si_0\}$ (here, $c$ is the maximum of the two constants appearing in \eqref{eq:standard elliptic regularity} and \eqref{eq:stima interpolatoria PogTesi}) and we plug \eqref{eq:stima interpolatoria PogTesi} into \eqref{eq:standard elliptic regularity}, 
we easily find that
\begin{equation*}
\|h\|_{m,\al;\,\Om} \le c \left( \nr h-h_\Om \nr_{2,\Om} + \|q_\nu-R\|_{m-1,\al;\,\Ga}\right),
\end{equation*}
by using the trivial inequality 
$
\nr \na h \nr_{\infty, \Om }  \le \|h\|_{m,\al;\,\Om}.
$
Here, $c$ can be explicitly derived in terms of $\si_0$ and the constants in \eqref{eq:standard elliptic regularity} and \eqref{eq:stima interpolatoria PogTesi}.

The term $\|q_\nu-R\|_{m-1,\al;\,\Ga}$ can be easily bounded by a constant depending on the same relevant parameters. Finally, 
Proposition \ref{prop:L^2 bound for oscillation} and Remark \ref{rem:uniform Poincare inequalities} ensure that the norm $\nr h-h_\Om \nr_{2,\Om}$ can be bounded by a constant depending on the same relevant parameters. 
The desired conclusion then ensues.
\end{proof}

\section{Quantitative symmetry results}
\label{sec:stability}

In this section, based on the identity \eqref{neumann-identity-intro}, we shall provide our stability estimates for the radial symmetry in the reverse Serrin overdetermined problem (Theorems \ref{thm:prova} and \ref{thm:stability-reverse-serrin}). 
To this aim, 
for a point $z \in \Om$, we recall the definition \eqref{def-rhos}:
\begin{equation*}
    \rho_i = \min_{x \in \Ga} |x-z| \quad \text{and} \quad \rho_e = \max_{x \in \Ga} |x-z|.
\end{equation*}
It is clear that  $\rho_i$ is the radius of the largest ball contained in $\Om$ and $\rho_e$ is the radius of the smallest ball that contains $\Om$, both balls centered at $z$. 
\par
Thus, for a solution $u$ of \eqref{torsion}, \eqref{neumann}, we want to bound the difference $\rho_e-\rho_i$ in terms of the sum of the oscillation of $u$ on $\Ga$ and a suitable Lebegue norm of $|\na_\Ga u|$ on $\Ga$.

\begin{rem}
{\rm
Let $u$ be a solution of \eqref{torsion} , \eqref{neumann}. A convenient choice for the point $z$ in \eqref{def-rhos} is any  minimum point of $u$ in $\ol{\Om}$. In fact, such a point must fall in $\Om$, since otherwise $u_\nu\le 0$ at that point, contrary to \eqref{neumann}.
}
\end{rem}

\par
We shall first adapt to our case some estimates on the oscillation of a harmonic function, which were derived in \cite{MP5}.

\subsection{Interpolating estimates for Sobolev functions}

We now recall three key inequalities for the oscillation of a harmonic function.  These descend from some interpolation inequalities proved in \cite[Theorem 4.1]{MP5} (see also \cite[Appendix]{MP6}). Here, we present these inequalities for $C^2$ domains, as it is enough for the purposes of the present paper. Nevertheless, the original theorem was stated and proved for a bounded domain satisfying the $(\theta,a)$-uniform cone condition (see \cite[Appendix]{MP6} for a definition). 

\begin{lem}
\label{thm osc bound}
    Let $\Om \subset \RR^N$, $N \geq 2$, be a bounded domain with boundary $\Ga$ of class $C^2$, and let $u$ be solution of \eqref{torsion}, \eqref{neumann}.
\par
Let $h = q^z-u$, where $q^z$ is that in \eqref{quadratic} and $z$ is any global minimum point of $u$ in $\ol{\Om}$. Then, we have that
    \begin{equation*}
        \osc_\Ga h \le c 
        \begin{cases}
            \|\sqrt{\de_\Ga}\, \na^2 h \|_{2,\Om} &\ \mbox{ if } \ N=2 ,
            \\
            \|\sqrt{\de_\Ga} \, \na^2 h \|_{2,\Om}  \max \left[\log \left( \frac{e \|\na h \|_{\infty,\Omega}}{\|\sqrt{\de_\Ga}\, \na^2 h \|_{2,\Om}} \right), 1 \right] &\ \mbox{ if } \ N=3 ,
            \\
            \| \na h \|^{\frac{N-3}{N-1}}_{\infty,\Om}\,
            \|\sqrt{\de_\Ga} \, \na^2 h \|_{2,\Om}^{\frac{2}{N-1}}
            & \ \mbox{ if } \ N\ge 4 ,
        \end{cases}
    \end{equation*}
where $c$ is an explicit constant that only depends on $N, d_\Om, r_i$, and a lower bound for $\de_\Ga(z)$.
\end{lem}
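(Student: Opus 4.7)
The plan is to invoke the interpolation inequalities of \cite[Theorem 4.1]{MP5} (see also \cite[Appendix]{MP6}) applied to the harmonic function $h = q^z - u$. These inequalities are precisely designed to estimate the oscillation of a harmonic function which possesses an interior critical point, in terms of a $\sqrt{\de_\Ga}$-weighted $L^2$-norm of its Hessian and an $L^\infty$-norm of its gradient. They yield exactly the three-case profile claimed in the lemma.

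First, I would verify the structural hypotheses on $h$. Harmonicity is immediate, since $\De q^z = N = \De u$ by \eqref{torsion}, so $\De h = 0$ in $\Om$. Next, $z$ is an interior critical point of $h$: indeed, as observed in the remark preceding the lemma, since $u_\nu = R > 0$ on $\Ga$, any global minimum point $z$ of $u$ in $\ol{\Om}$ must lie in $\Om$, whence $\na u(z) = 0$; combined with $\na q^z(x) = x - z$, which vanishes at $x = z$, this gives $\na h(z) = 0$. In particular $\de_\Ga(z) > 0$, providing the geometric quantity on which the constant is permitted to depend. Furthermore, because $h$ is harmonic, the maximum principle yields $\osc_\Ga h = \osc_{\ol{\Om}} h$, so it suffices to bound the latter.

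With these hypotheses in place, applying \cite[Theorem 4.1]{MP5} to $h$ delivers the three-branched estimate directly: the weighted Hessian norm alone controls $\osc_{\ol{\Om}} h$ when $N = 2$; a logarithmic correction involving the ratio $\|\na h\|_{\infty,\Om}/\|\sqrt{\de_\Ga}\,\na^2 h\|_{2,\Om}$ appears in the borderline case $N = 3$; for $N \ge 4$ one obtains the power profile with exponents $(N-3)/(N-1)$ and $2/(N-1)$. The explicit dependence of $c$ on $N$, $d_\Om$, $r_i$, and a lower bound for $\de_\Ga(z)$ is already tracked in that reference.

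The main obstacle is essentially bookkeeping: \cite[Theorem 4.1]{MP5} is originally stated for domains satisfying a $(\te,a)$-uniform cone condition, so one must confirm that such parameters can be chosen to depend only on the $C^2$ regularity of $\Ga$ (equivalently, on $r_i$), in order to retain the stated dependencies of $c$. Apart from this verification, the proof reduces to a direct application of the cited interpolation estimate, with the interior critical point hypothesis being automatic from the sign of the Neumann datum.
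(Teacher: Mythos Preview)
Your proposal is correct and follows essentially the same approach as the paper: both reduce the lemma to a direct application of \cite[Theorem 4.1]{MP5} after verifying that $h$ is harmonic with $\na h(z)=0$, and both remark that the cited theorem, though stated for a specific Dirichlet-related function, in fact applies to any harmonic function with an interior critical point. Your treatment is, if anything, slightly more explicit in checking the hypotheses (e.g., the maximum-principle reduction of $\osc_\Ga h$ to $\osc_{\ol\Om} h$ and the cone-condition bookkeeping).
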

\begin{proof}
By definition of $h$ and the choice of $z$, we have that $h$ is harmonic and that $\na h(z)=0$ at the point $z\in\Om$.
	
The desired result is essentially contained in \cite[Theorem 4.1]{MP5}. To be precise, that theorem gives an estimate for the difference $\rho_e-\rho_i$, which is there related to the oscillation of the particular harmonic function $v=q-f$, where $f$ is the solution of the Dirichlet problem \eqref{torsion}, \eqref{dirichlet-homogeneous} and $q$ is given in \eqref{quadratic}. Indeed there holds that
$$
\max_{\Ga} v - \min_{\Ga} v=\frac12\,(\rho_e^2-\rho_i^2)
$$ 
and
\begin{equation}
\label{bound-h-rho}
\rho_e^2-\rho_i^2\ge\left(\frac{|\Om|}{|B|}\right)^\frac1{N}(\rho_e-\rho_i).
\end{equation}
\par
Now, if $u$ is a solution of the Neumann problem \eqref{torsion}, \eqref{neumann}, instead, $u$ is no longer constant on $\Ga$. Nevertheless, a careful inspection reveals that the inequalities contained in \cite[Theorem 4.1]{MP5} concern the oscillation on $\Ga$ of any harmonic function in $\Om$, whose gradient vanishes at a given point $z$ in $\Om$, and that the constant $c$ only depends on $N$, $d_\Om$, $r_i$, and a lower bound of the distance $\de_\Ga (z)$ of $z$ to $\Ga$. 
Thus, the desired result follows.
\end{proof}

\begin{rem}\label{rem:possibilestima de(z)}
{\rm
By adapting the arguments of \cite[Remark 2.9]{MP3} (see also \cite{MP4} for generalizations) to the present setting, we can choose $x\in \Om$ such that $\de_{\Ga}(x)=r_i$, $y\in \Ga$ such that $|y-z|=\de_\Ga(z)$ and, recalling Lemma \ref{lemma geo bound -u > de^2}, compute that 
\begin{multline*}
	\frac{r_i^2}{2} 
	\le \ol{u} - u (x) 
	\le \ol{u} + \max_{\ol{\Om}}(-u) 
	 = \ol{u} - u(z) =
	\\
	 \ol{u} - u(y) + u(y) - u(z) 
	 \le \osc_{\Ga} u + \nr \na u \nr_{\infty , \Om} \,  \de_\Ga (z) ,
\end{multline*}
from which we get that
$$
\de_\Ga (z) \ge \frac{r_i^2 - 2 \osc\limits_\Ga u }{2 \nr \na u \nr_{\infty , \Om} }.
$$
}
\end{rem}

In the following lemma, we use the tangential norm defined by 
\begin{equation*}
    \nr \ol{u} - u \nr_{1,2,\tau,\Ga}^2 = \nr \ol{u} - u \nr^2_{2,\Ga} + \nr \na_\Ga u \nr^2_{2,\Ga}.
\end{equation*}

\begin{lem}\label{lem bound in W_1,2,Ga}
    Let $\Om \subset \RR^N$, $N \geq 2$, be a bounded domain with boundary $\Ga$ of class $C^2$, and let $u$ be a solution of \eqref{torsion}, \eqref{neumann}.
\par
Let $h = q-u$, where $q$ is that in \eqref{quadratic} and $z$ is any global minimum point of $u$ in $\ol{\Om}$. 
Then, we have that
\begin{equation*}
    \int_\Om (\ol{u}-u)\,|\na^2 h|^2\,dx \le c\Bigl[1+\osc_{\Ga} u\Bigr] \nr \ol{u} - u \nr_{1,2,\tau,\Ga}^2,
\end{equation*}
where $c$ is an explicit constant that only depends on $N, d_\Om, r_i$, $r_e$, and a lower bound for $\de_\Ga(z)$.
\end{lem}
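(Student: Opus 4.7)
My plan is to estimate, one at a time, the three surface integrals on the right-hand side of \eqref{neumann-identity-intro}. A uniform preliminary input is Theorem~\ref{th:bound-derivatives} with $m=1$, which gives $\nr h-h_\Om\nr_{1,\al;\,\Om}\le C$ for a constant $C$ depending only on $N$, $d_\Om$, $r_i$, and $r_e$; in particular $\nr h_\nu\nr_{\infty,\Ga}$ is uniformly controlled.

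The first and third boundary integrals are direct. For the first, by H\"older,
\begin{equation*}
\frac12\int_\Ga |\na_\Ga u|^2\, h_\nu\, dS_x \le \frac12\,\nr h_\nu\nr_{\infty,\Ga}\,\nr \na_\Ga u\nr_{2,\Ga}^2 \le C\,\nr \na_\Ga u\nr_{2,\Ga}^2.
\end{equation*}
For the third, Remark~\ref{rem: kappa min max} yields the pointwise bound $|\lan (\na\nu)\na_\Ga u,\na_\Ga u\ran|\le\max(1/r_i,1/r_e)\,|\na_\Ga u|^2$ on $\Ga$, which combined with the trivial estimate $\ol u-u\le\osc_\Ga u$ on $\Ga$ gives a contribution of order $C\,(\osc_\Ga u)\,\nr \na_\Ga u\nr_{2,\Ga}^2$. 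Both fit the target $c\,[1+\osc_\Ga u]\,\nr \ol u-u\nr_{1,2,\tau,\Ga}^2$ via the $\nr \na_\Ga u\nr_{2,\Ga}^2$ component of the tangential norm.

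The delicate step is the second integral $\int_\Ga (\ol u-u)\,A\,h_\nu\,dS_x$, where $A=(N-1)\,R\,H-N$. Since $h$ is harmonic, the divergence theorem yields $\int_\Ga h_\nu\,dS_x=0$, so for any constant $c_0\in\RR$
\begin{equation*}
\int_\Ga (\ol u-u)\,A\,h_\nu\,dS_x = \int_\Ga \bigl[(\ol u-u)A-c_0\bigr]h_\nu\,dS_x.
\end{equation*}
Taking $c_0$ to be the mean value of $(\ol u-u)A$ on $\Ga$, applying Cauchy--Schwarz, and invoking the Poincar\'e inequality on the closed manifold $\Ga$ together with the product rule $\na_\Ga[(\ol u-u)A]=-A\,\na_\Ga u+(\ol u-u)\,\na_\Ga A$, I would arrive at
\begin{equation*}
\left|\int_\Ga (\ol u-u)\,A\,h_\nu\,dS_x\right| \le C\,\nr h_\nu\nr_{2,\Ga}\,\nr \ol u-u\nr_{1,2,\tau,\Ga},
\end{equation*}
with $\nr A\nr_{\infty,\Ga}$ and $\nr \na_\Ga A\nr_{\infty,\Ga}$ bounded in terms of $N$, $R$, $r_i$, $r_e$, and the regularity of $\Ga$.

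The main obstacle is to promote this from a linear to a quadratic dependence on $\nr \ol u-u\nr_{1,2,\tau,\Ga}$. I would try to show that $\nr h_\nu\nr_{2,\Ga}$ is itself controlled by the same tangential norm, up to a factor $1+\osc_\Ga u$: heuristically, Theorem~\ref{th:quadratic-test} forces $h_\nu\equiv 0$ whenever $u$ is constant on $\Ga$, so $\nr h_\nu\nr_{2,\Ga}$ should vanish quantitatively with $\osc_\Ga u$ and $\nr \na_\Ga u\nr_{2,\Ga}$. Should a direct estimate of this type prove elusive, an alternative is to apply Young's inequality on $\nr h_\nu\nr_{2,\Ga}\,\nr \ol u-u\nr_{1,2,\tau,\Ga}$ and reabsorb the residual $\nr h_\nu\nr_{2,\Ga}^2$ into the left-hand side of \eqref{neumann-identity-intro} by means of a weighted Rellich-type identity for the harmonic function $h$, making use of the weight $\ol u-u\ge\frac12\,r_i\,\de_\Ga$ from Lemma~\ref{lemma geo bound -u > de^2}.
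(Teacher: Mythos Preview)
Your ``alternative'' route---reabsorbing $\nr h_\nu\nr_{2,\Ga}^2$ into the left-hand side via a Rellich-type inequality---is precisely what the paper does, and it is the entire argument; your primary route is both unnecessary and flawed. For the second surface integral the paper applies Cauchy--Schwarz directly, \emph{without} subtracting any mean:
\[
\int_\Ga (\ol u-u)\,A\,h_\nu\,dS_x \le \nr A\nr_{\infty,\Ga}\,\nr h_\nu\nr_{2,\Ga}\,\nr\ol u-u\nr_{2,\Ga},
\]
with $\nr A\nr_{\infty,\Ga}$ controlled by $N,R,r_i,r_e$. Your Poincar\'e detour would require a bound on $\nr\na_\Ga A\nr_{\infty,\Ga}$, hence on $\na_\Ga H$, which the hypothesis $\Ga\in C^2$ does not give (you would need $\Ga\in C^3$), so the constant would no longer depend only on the stated parameters. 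For the first integral the paper also avoids Theorem~\ref{th:bound-derivatives} entirely: since $h_\nu=q_\nu-R$ one has the elementary pointwise bound $|h_\nu|\le d_\Om+R$, so no Schauder estimate is needed here.

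The specific Rellich-type input is \cite[Lemma~2.5]{MP3}: for harmonic $h$ with $\na h(z)=0$ one has
\[
\int_\Ga h_\nu^2\,dS_x \le \int_\Ga |\na h|^2\,dS_x \le c\int_\Om (-f)\,|\na^2 h|^2\,dx,
\]
where $f$ solves \eqref{torsion}, \eqref{dirichlet-homogeneous} and $c=c(N,d_\Om,r_i,\de_\Ga(z))$; this is where the dependence on a lower bound for $\de_\Ga(z)$ enters. Comparison gives $-f\le\ol u-u$, whence $\nr h_\nu\nr_{2,\Ga}^2\le c\int_\Om(\ol u-u)|\na^2 h|^2\,dx$. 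Substituting this into the three estimated surface integrals yields an inequality of the form $X\le c\bigl[(1+\osc_\Ga u)\,\nr\ol u-u\nr_{1,2,\tau,\Ga}^2+\sqrt{X}\,\nr\ol u-u\nr_{1,2,\tau,\Ga}\bigr]$ with $X=\int_\Om(\ol u-u)|\na^2 h|^2\,dx$, which resolves algebraically into the stated bound.
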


\begin{proof}
For simplicity of notation, in this proof by the same letter $c$ we shall denote a generic constant, whose dependence will be specified if needed.
\par
We start working on the right-hand side of \eqref{neumann-identity-intro}. 
By Remark \ref{rem: kappa min max} and the Holder inequality, we see that there is a constant $c$, only depending on $N, d_\Om, r_e$ and $r_i$, such that
\begin{multline*}
    \int_\Om (\ol{u}-u)\,|\na^2 h|^2\,dx \le \\
c\biggl[\int_\Ga |\na_\Ga u|^2\,dS_x
    +\|h_\nu\|_{2,\Ga} \nr \ol{u} - u \nr_{2,\Ga}+\nr \ol{u} - u \nr_{\infty,\Ga} 
\int_\Ga |\na_\Ga u|^2\,dS_x \biggr].
\end{multline*}
Here, we have used the fact that $|h_\nu| \le d_\Om + R$ on $\Ga$.
\par
Next, let us define by $f$ the unique solution of 
\begin{equation*}
    \De f=N \ \mbox{ in } \ \Om, 
    \qquad
    f = 0 \ \text{ on } \ \Ga.
\end{equation*}
By \cite[Lemma 2.5]{MP3} applied with $v=h$, we obtain that
\begin{equation*}
\int_\Ga h_\nu^2\, dS_x \le  \int_\Ga |\na h|^2 dS_x \le c \int_\Om (-f)\,|\na^2 h|^2 dx
\end{equation*}
where $c$ is an explicit constant that only depends on $N, d_\Om, r_i,$ and a lower bound for $\de_\Ga(z)$.
Since, by comparison, $\ol{u} - u \ge -f$  on $\ol{\Om}$, we then infer that
\begin{equation*}
   \int_\Ga h_\nu^2\, dS_x \le c\int_\Om (\ol{u}-u)\,|\na^2 h|^2\,dx.
\end{equation*}
Hence, this formula and the trivial inequalities 
\begin{equation*}
\nr \ol{u} - u \nr_{\infty,\Ga} \leq \osc_{\Ga} u , \, \
\nr \ol{u} - u \nr_{2,\Ga} \le \nr \ol{u} - u \nr_{1,2,\tau,\Ga}, \, \
\nr \na_\Ga u \nr_{2,\Ga} \leq \nr \ol{u} - u \nr_{1,2,\tau,\Ga}
\end{equation*} 
give that 
\begin{multline*}
\int_\Om (\ol{u}-u)\,|\na^2 h|^2\,dx \le \\
c\, \nr \ol{u} - u \nr_{1,2,\tau,\Ga}
    \left[ \Bigl(1+\osc_{\Ga} u \Bigr) \nr \ol{u} - u \nr_{1,2,\tau,\Ga} +\sqrt{\int_\Om (\ol{u}-u)\,|\na^2 h|^2\,dx}   \right].
\end{multline*}
This inequality easily gives the desired bound.
\end{proof}

\smallskip

We are now in position to prove our quantitative estimates for the radial symmetry in the reverse Serrin problem.

\begin{proof}[Proof of Theorem \ref{thm:prova}]
Set 
\begin{equation}\label{eq:definition sigma in proof main thm}
\displaystyle \si = \min \left\{1, \frac{r_i^2}{4}\right\}. 
\end{equation}
We first notice that if  
$$
    \osc_\Ga u + \nr \na_\Ga u \nr_{\infty, \Ga} \ge \si,
$$
then the conclusion trivially holds true. In fact, by recalling the definition of $\psi$ in \eqref{eq:old-profile-psi}, we easily get that  
$$
    \rho_e - \rho_i \le d_\Om \le 
    \begin{cases}
           \frac{d_\Om}{\si} \, \psi\left( \osc_\Ga u + \nr \na_\Ga u \nr_{\infty, \Ga} \right)
            & \text{if } N=2,3 ,
            \\
            \frac{d_\Om}{\si^{2/(N-1)}} \, \psi \left( \osc_\Ga u + \nr \na_\Ga u \nr_{\infty, \Ga} \right)
            & \text{if } N \geq 4.
    \end{cases}
$$ 
Thus, it only remains to prove the result when
\begin{equation*}
	\osc_\Ga u + \nr \na_\Ga u \nr_{\infty, \Ga} < \si.
\end{equation*}
\par
Under this assumption and recalling \eqref{torsion} and \eqref{neumann}, it is clear that
	\begin{equation}\label{eq:free gradient bound}
	\nr \na u \nr_{\infty, \Om} = \nr \na u \nr_{\infty, \Ga} = \nr \na_\Ga u + u_\nu \,  \nu \nr_{\infty, \Ga} < \si +R 
	\le \si + d_\Om.
	\end{equation}
The last inequality follows  from \eqref{def-R}
and  by putting together the isoperimetric inequality $|\Ga| \ge N |B|^{1/N} |\Om|^{(N-1)/N}$ and the trivial bound $|\Om| \le |B| \, d_\Om^N$.

	As usual, we consider the harmonic function $h=q-u$ and choose the minimum point of $q$ to coincide with the point $z$ in our assumptions. Since $z$ is a minimum point for both $q$ and $u$,
we have that $\na h(z)=0$. 

Thanks to Lemma \ref{lemma geo bound -u > de^2}, the left-hand side of identity \eqref{neumann-identity-intro} can be estimated from below as
    \begin{equation}\label{ineq: de |na^2 h|^2 < (ol{u}-u) |na^2 h|^2}
    \frac12\,r_i\int_\Om \de_\Ga \, |\na^2 h|^2 \,dx \le \int_\Om (\ol{u}-u)\, |\na^2 h|^2 \,dx .
    \end{equation}
    Now, from \eqref{bound-h-rho} we infer that 
    \begin{equation}\label{ineq: (ro_e - ro_i) < osc h + osc u}
        \frac12 \left(\frac{|\Om|}{|B|}\right)^\frac1{N}(\rho_e-\rho_i)\le
        \osc_{\Ga} q \leq \osc_{\Ga} h + \osc_{\Ga} u.
    \end{equation}
    Combining Lemma \ref{thm osc bound}, Lemma \ref{lem bound in W_1,2,Ga}, \eqref{ineq: de |na^2 h|^2 < (ol{u}-u) |na^2 h|^2}, and the trivial inequality $\sqrt{a^2+b^2} \leq |a|+|b|$, 
we infer that
    \begin{equation}\label{eq:bound osc h in proof main thm}
        \osc_\Ga h \leq c \, \psi\left( \osc_\Ga u + \nr \na_\Ga u \nr_{\infty, \Ga} \right),
    \end{equation}
    for some explicit constant $c$ that only depends on $N, d_\Om$, $r_e$, and $r_i$. 
    Here, we used two facts. Firstly, \eqref{eq:free gradient bound}, \eqref{eq:definition sigma in proof main thm} and the trivial bound $\nr \na q \nr_{\infty,\Om} \le d_\Om$ give the explicit upper bound for $|\na h|$:
    $$
    \nr \na h \nr_{\infty, \Om} \le \nr \na q \nr_{\infty, \Om} + \nr \na u \nr_{\infty, \Om} \le \min \left\{1, \frac{r_i^2}{4}\right\} + 2 d_\Om.
    $$
Secondly, combining Remark \ref{rem:possibilestima de(z)},
\eqref{eq:free gradient bound}, \eqref{eq:definition sigma in proof main thm}, and the current smallness assumption on the deviation of $u$ from being constant on $\Ga$, gives the explicit lower bound for $\de_{\Ga}(z)$:
    $$
    \de_{\Ga}(z) \ge \frac{r_i^2}{2\left( \min\left\lbrace 1, \frac{r_i^2}{4} \right\rbrace + d_\Om \right)}.
    $$
    
    Finally, by the trivial inequality $\osc_\Ga u \leq \osc_\Ga u + \nr \na_\Ga u \nr_{\infty, \Ga}$ and the definition of $\psi$, we easily obtain that
    \begin{equation*}
        \psi\left( \osc_\Ga u + \nr \na_\Ga u \nr_{\infty, \Ga} \right) + \osc_\Ga u \leq C \psi\left( \osc_\Ga u + \nr \na_\Ga u \nr_{\infty, \Ga} \right),
    \end{equation*}
    with
    \begin{equation*}
    	C=\begin{cases}
    		2
    		& \text{if } N=2,3,
    		\\
    		1+ \si^{(N-3)/(N-1)}  
    		& \text{if } N \geq 4 ,
    	\end{cases}
    \end{equation*}
    where $\si$ is that defined in \eqref{eq:definition sigma in proof main thm}.

   The conclusion easily follows by putting together the last inequality, \eqref{eq:bound osc h in proof main thm}, and \eqref{ineq: (ro_e - ro_i) < osc h + osc u}.
\end{proof}

\begin{proof}[Proof of Theorem \ref{thm:stability-reverse-serrin}]
We shall adapt the argument used in the proof of Theorem \ref{thm:prova} to the setting with the current deviation.
\par
Let $\si$ be given by \eqref{eq:definition sigma in proof main thm}. As before, it is clear that it is enough to check the case in which
\begin{equation*}
    \osc_\Ga u \, + \|\na_\Ga u\|_{2,\Ga} < \si.
\end{equation*}

This time, we observe that
\begin{equation}\label{eq:free gradient bound 2}
	\nr \na u \nr_{\infty, \Om} \le \nr \na q \nr_{\infty, \Om} + \nr \na h \nr_{\infty, \Om}  \leq d_{\Om} + \nr \na h \nr_{\infty,\Om}.
\end{equation}
An explicit bound for $\nr \na h \nr_{\infty, \Om}$ will be obtained at the end of the proof.
\par
Also, as before we can use 
\eqref{ineq: de |na^2 h|^2 < (ol{u}-u) |na^2 h|^2}. 
Similarly, by proceeding as in the proof of Theorem \ref{thm:prova}, with 
$\osc_\Ga u + \nr \na_\Ga u \nr_{\infty, \Ga}$ replaced by $\osc_\Ga u + \nr \na_\Ga u \nr_{2, \Ga}$, we arrive at the inequalities
\begin{equation}
\label{eq:inequality trivial for proof main thm 2}
	\psi\left( \osc_\Ga u + \nr \na_\Ga u \nr_{2, \Ga} \right) + \osc_\Ga u \leq C\, \psi\left( \osc_\Ga u + \nr \na_\Ga u \nr_{2, \Ga} \right)
\end{equation}
and
\begin{equation}\label{eq:osc h in main thm 2}
        \osc_\Ga h \leq c \, \psi\left( \osc_\Ga u + \nr \na_\Ga u \nr_{2, \Ga} \right).
    \end{equation}
Here, $\psi$ is that in \eqref{eq:old-profile-psi}, $C$ is the same constant appearing in the proof of Theorem \ref{thm:prova}, and $c$ is an explicit constant that only depends on $N, d_\Om$, $r_i$, $r_e$, a lower bound for $\de_{\Ga}(z)$, and an upper bound for $\nr \na h \nr_{\infty,\Om}$. 
\par
The desired conclusion will easily follow from \eqref{ineq: (ro_e - ro_i) < osc h + osc u}, \eqref{eq:osc h in main thm 2}, and \eqref{eq:inequality trivial for proof main thm 2}, after obtaining suitable bounds for $\de_\Ga (z)$ and $ \nr \na h \nr_{\infty,\Om}$.

By putting together \eqref{eq:definition sigma in proof main thm}, Remark \ref{rem:possibilestima de(z)}, \eqref{eq:free gradient bound 2}, and the current smallness assumption on the deviation of $u$ from being constant on $\Ga$,  we easily get that
$$
\de_\Ga (z) \ge \frac{r_i^2}{4( d_\Om + \nr \na h \nr_{\infty,\Om} )}.
$$
We are thus left to obtain an upper bound for $\nr \na h \nr_{\infty,\Om}$.
\par
Thanks to Theorem~\ref{th:bound-derivatives} (with $m=1$) and recalling \cite[Corollary 3.14]{ABMMZ}, $\nr\na h\nr_{\infty,\Om}$ can be clearly bounded by a constant that 
depends on $N, d_\Om, r_i$, and $r_e$.

When $\Ga$ is of class $C^{2,\al}$, we can obtain an improvement of the profile $\psi$, by arguing  as in \cite[Lemma 4.2 and Corollary~4.3]{MP5} (with $p=2$, $q=+\infty$). In fact, we infer that
$$
\nr \na h \nr_{\infty,\Om} \le c\, \nr \na^2 h \nr_{\infty,\Om}^{(N-1)/(N+1)} \nr \de_\Ga^{1/2} \na^2 h \nr_{2,\Om}^{2/(N+1)},
$$
for some $c$ depending on $N, d_\Om$, $r_i$, $r_e$. Hence, by recalling Theorem \ref{th:bound-derivatives} (with $m=2$), we easily get that
$$
\nr \na h \nr_{\infty,\Om} \le c \, \nr \de_\Ga^{1/2} \na^2 h \nr_{2,\Om}^{2/(N+1)},
$$
where $c$ depends on $N, d_\Om,$ and the $C^{2,\al}$-regularity of $\Ga$.
\par
Plugging the last inequality in Lemma \ref{thm osc bound} and using Lemma \ref{lem bound in W_1,2,Ga} and \eqref{ineq: de |na^2 h|^2 < (ol{u}-u) |na^2 h|^2} as before, gives \eqref{eq:osc h in main thm 2} with the improved profile shown in \eqref{eq:improvementMinE}. Clearly, for this profile, \eqref{eq:inequality trivial for proof main thm 2} remains true with $C$ given by
\begin{equation*}
	C=\begin{cases}
		2
		& \text{if } N=2,3,
		\\
		1+ \si^{(N-3)/(N+1)}  
		& \text{if } N \geq 4.
	\end{cases}
\end{equation*}

Thus, combining \eqref{ineq: (ro_e - ro_i) < osc h + osc u}, \eqref{eq:osc h in main thm 2}, and \eqref{eq:inequality trivial for proof main thm 2} gives the improved stability for $N \ge 4$.
\end{proof}

\section{Stability via the classical Serrin problem}\label{sec:discussion reverse Serrin via Serrin}
\label{sec:trick}
We now present a quantitative bound for the reverse Serrin problem that can be obtained exploiting the existing stability results for the classical Serrin problem. The proof is based on a simple trick. As discussed in the Introduction, we get a poorer estimate than those obtained in Theorems \ref{thm:prova} and \ref{thm:stability-reverse-serrin}. 
In fact, the main drawback is that the deviations $\osc_\Ga u+\|\na_\Ga u\|_{\infty,\Ga}$ and $\osc_\Ga u+\|\na_\Ga u\|_{2,\Ga}$ used in Theorems \ref{thm:prova} and \ref{thm:stability-reverse-serrin} must be replaced by the much more stringent deviation $\| \ol{u}-u \|_{C^{1, \al }(\Ga)}$.

We start with an estimate for an auxiliary function.

\begin{prop}
		\label{lem:for alternative stability-reverse-serrin}
		Let $\Om \subset \RR^N$, $N \geq 2$, be a bounded domain with boundary $\Ga$ of class $C^{1,\al}$, $0<\al<1$. 
		Let $u$ be a solution of \eqref{torsion}, \eqref{neumann}. 
		\par
Set $f=u-w$, where $w$ is the solution of the Dirichlet problem
		\begin{equation*}
			\De w=0 \ \mbox{ in } \ \Om, 
			\qquad
			w = u \ \text{ on } \ \Ga.
		\end{equation*}
Then, we have that 
\begin{equation*}
			\nr R - f_\nu \nr_{C^{0,\al}(\Ga)} \le c \, \| \ol{u}-u \|_{C^{1, \al }(\Ga)}.
		\end{equation*}
Here, $c$ only depends on $N$ and the $C^{1, \al}$-regularity of $\Ga$. 
	\end{prop}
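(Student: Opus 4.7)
The plan is to reduce the claimed estimate to a classical boundary Schauder estimate for the harmonic Dirichlet problem, after observing that $R - f_\nu$ coincides with $w_\nu$ on $\Ga$.

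First, I would use the Neumann condition \eqref{neumann} and the definition $f = u - w$ to compute, on $\Ga$,
\[
f_\nu = u_\nu - w_\nu = R - w_\nu,
\]
so that $R - f_\nu = w_\nu$ on $\Ga$. Hence it is enough to prove
\[
\nr w_\nu \nr_{C^{0,\al}(\Ga)} \le c\,\nr \ol u - u \nr_{C^{1,\al}(\Ga)}.
\]
(Equivalently, since $w = u$ on $\Ga$ one has $\na_\Ga w = \na_\Ga u$, so that $\na f = (u_\nu - w_\nu)\,\nu$ on $\Ga$, confirming $f_\nu = R - w_\nu$.)

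Next, I would introduce the shifted auxiliary function $v := w - \ol u$. Since $\ol u$ is a constant, $v$ is harmonic in $\Om$ with Dirichlet datum $v|_\Ga = u - \ol u = -(\ol u - u)|_\Ga$. In particular $v_\nu = w_\nu$ on $\Ga$ and $\nr v|_\Ga \nr_{C^{1,\al}(\Ga)} = \nr \ol u - u \nr_{C^{1,\al}(\Ga)}$. A standard Schauder boundary estimate for the harmonic Dirichlet problem on a $C^{1,\al}$ domain (see, e.g., \cite{GT, Li}) then gives
\[
\nr v \nr_{C^{1,\al}(\ol \Om)} \le c \left( \nr v \nr_{C^0(\ol \Om)} + \nr v|_\Ga \nr_{C^{1,\al}(\Ga)} \right),
\]
with $c$ depending only on $N$ and the $C^{1,\al}$-regularity of $\Ga$. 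Since the maximum principle yields $\nr v \nr_{C^0(\ol \Om)} \le \nr v|_\Ga \nr_{C^0(\Ga)}$, the right-hand side is dominated by $c\,\nr \ol u - u \nr_{C^{1,\al}(\Ga)}$. Finally, the trace inequality $\nr v_\nu \nr_{C^{0,\al}(\Ga)} \le \nr \na v \nr_{C^{0,\al}(\ol \Om)} \le \nr v \nr_{C^{1,\al}(\ol \Om)}$ (which uses $\nu \in C^{0,\al}(\Ga)$, granted by the $C^{1,\al}$-regularity of $\Ga$) concludes the argument.

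The only delicate point I foresee is citing the Schauder estimate in the correct form, namely with the stated dependence on the $C^{1,\al}$-regularity of $\Ga$ (rather than $C^{2,\al}$). This is classical material, so the obstacle is essentially bookkeeping rather than conceptual; no other nontrivial step enters the proof.
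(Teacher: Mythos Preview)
Your proposal is correct and follows essentially the same approach as the paper: both reduce the estimate to $\nr w_\nu\nr_{C^{0,\al}(\Ga)}$ via the identity $R-f_\nu=w_\nu$, then bound this by $\nr \ol{u}-w\nr_{C^{1,\al}(\ol{\Om})}$ and apply a global $C^{1,\al}$ Schauder estimate for the harmonic Dirichlet problem (the paper cites \cite[Theorem~8.33]{GT} directly, whereas you spell out the intermediate maximum-principle and trace steps). The only difference is the level of detail, not the strategy.
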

	\begin{proof}
We notice that
	\begin{equation*}
	\nr	R - f_\nu \nr_{C^{0,\al}(\Ga)} = \nr w_\nu \nr_{C^{0,\al}(\Ga)} \le \nr \ol{u}-w \nr_{C^{1,\al}(\ol{\Om})}.
	\end{equation*}
The desired conclusion then follows from \cite[Theorem 8.33]{GT}.
	\end{proof}
	
	\begin{thm}[The reverse Serrin problem with strong deviation]
		\label{thm:Alternative-stability-reverse-serrin}
		Let $\Om $ be a bounded domain in $\RR^N$, $N \geq 2$,
		with boundary $\Ga$ of class $C^{1,1}$. 
\par
Let $u$ be solution of \eqref{torsion}, \eqref{neumann}, 
and let $z$ be any global minimum point of $u$ on $\ol{\Om}$. 
Then, we have that 
		\begin{equation*}
			\rho_e-\rho_i \le c \, \psi\left( \| \ol{u}-u \|_{C^{1, \al }(\Ga)} \right),
		\end{equation*}
		where $\psi$ is the profile defined in \eqref{eq:old-profile-psi}. The constant $c$ only depends on $N$, $d_\Om$, $r_i$, and $r_e$.
\par
For $N \ge 4$ the profile $\psi$ can be improved to obtain \eqref{eq:improvementMinE}, at the cost of replacing the dependence of $c$ on the $r_i$ and $r_e$ with that on the $C^{2,\al}$-regularity of $\Ga$.  
 
	\end{thm}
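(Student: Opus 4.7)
The strategy is to \emph{reduce the reverse Serrin problem to the classical Serrin problem} via the decomposition $u=f+w$ introduced in Proposition~\ref{lem:for alternative stability-reverse-serrin}, and then to invoke the stability estimates for the classical Serrin problem from \cite{MP3, MP5}. With $w$ the harmonic extension of $u|_\Ga$ to $\Om$ and $f=u-w$, the function $f$ satisfies $\De f=N$ in $\Om$ and $f\equiv 0$ on $\Ga$; that is, $f$ solves \eqref{torsion}, \eqref{dirichlet-homogeneous}, and hence falls within the scope of the classical Serrin stability theorems.

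Proposition~\ref{lem:for alternative stability-reverse-serrin} gives
\[
\nr f_\nu - R\nr_{2,\Ga} \le |\Ga|^{1/2}\,\nr R - f_\nu\nr_{C^{0,\al}(\Ga)} \le c\,\nr \ol{u}-u\nr_{C^{1,\al}(\Ga)}.
\]
Applying \eqref{stability-classical-serrin} to $f$---or, for $N\ge 4$ and $\Ga$ of class $C^{2,\al}$, the sharper version with the profile \eqref{eq:improvementMinE} from \cite[Theorem~4.4]{MP5}---then yields
\[
\tilde{\rho}_e - \tilde{\rho}_i \le c\,\psi\bigl(\nr \ol{u}-u\nr_{C^{1,\al}(\Ga)}\bigr),
\]
where $\tilde{\rho}_e$, $\tilde{\rho}_i$ are defined as in \eqref{def-rhos} with respect to the reference point $\tilde{z}$ employed by the classical Serrin stability result (a global minimum point of $f$ in $\ol{\Om}$).

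It then remains to transfer this estimate from $\tilde{z}$ to the minimum point $z$ of $u$ prescribed in the hypotheses. The triangle inequality gives
\[
\rho_e(z) - \rho_i(z) \le \tilde{\rho}_e - \tilde{\rho}_i + 2\,|z-\tilde{z}|,
\]
so it suffices to bound $|z-\tilde{z}|$ by $c\,\nr \ol{u}-u\nr_{C^{1,\al}(\Ga)}$. Since $\na u(z)=0$, we have $\na f(z)=-\na w(z)$, and a standard Schauder estimate applied to the harmonic function $w-\ol{u}$ (with boundary datum $u-\ol{u}$) gives $\nr \na w\nr_{\infty,\Om} \le c\,\nr \ol{u}-u\nr_{C^{1,\al}(\Ga)}$. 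A quantitative non-degeneracy of $\na f$ near $\tilde{z}$---which follows from the fact that the classical Serrin stability makes $f$ close, in suitable norms, to the quadratic $q^{\tilde{z}}$ (whose gradient is $x-\tilde{z}$)---then yields $|z-\tilde{z}| \le c\,|\na f(z)|$, and hence the desired control. Since $t\le c\,\psi(t)$ for $t$ in a bounded range in every dimension, this linear contribution is absorbed into $c\,\psi(\nr \ol{u}-u\nr_{C^{1,\al}(\Ga)})$; the complementary case in which the right-hand side is not small is dismissed as usual by the trivial bound $\rho_e-\rho_i\le d_\Om$.

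The hard part is precisely the comparison between the two reference points $z$ and $\tilde z$ in the previous step, for which one needs a quantitative non-degeneracy of $\na f$ near its minimum. This is not a new difficulty: it can be extracted from the classical Serrin stability analysis itself (for instance, via Schauder estimates for the harmonic function $h=q^{\tilde{z}}-f$). As anticipated in the introduction, the price of this indirect argument, compared with Theorems \ref{thm:prova} and \ref{thm:stability-reverse-serrin}, is the use of the much more stringent deviation norm $\nr \ol{u}-u\nr_{C^{1,\al}(\Ga)}$ in place of $\osc_\Ga u+\nr \na_\Ga u\nr_{p,\Ga}$.
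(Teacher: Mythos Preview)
Your core strategy---decompose $u=f+w$ with $w$ harmonic and $w|_\Ga=u|_\Ga$, apply the classical Serrin stability \eqref{stability-classical-serrin} to $f$, and bound $\nr f_\nu-R\nr_{2,\Ga}$ via Proposition~\ref{lem:for alternative stability-reverse-serrin}---is exactly the paper's argument. The paper's proof is in fact shorter than yours: after observing that $f$ solves \eqref{torsion}, \eqref{dirichlet-homogeneous}, it simply invokes \cite[Theorem~3.1]{MP3} and \cite[Theorem~4.4]{MP5}, chains $\nr f_\nu-R\nr_{2,\Ga}\le |\Ga|^{1/2}\nr f_\nu-R\nr_{C^{0,\al}(\Ga)}$, and applies Proposition~\ref{lem:for alternative stability-reverse-serrin}. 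It does \emph{not} carry out any transfer between reference points: it writes $\rho_e-\rho_i$ and tacitly accepts whichever center the cited classical Serrin theorems supply.

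Your additional step---moving from the center $\tilde z$ (a minimum of $f$) to the center $z$ (a minimum of $u$) prescribed in the statement---is thus not in the paper, and your sketch of it contains a real gap. The inequality $|z-\tilde z|\le c\,|\na f(z)|$ that you label ``quantitative non-degeneracy of $\na f$'' is not justified. Writing $h=q^{\tilde z}-f$ one has $z-\tilde z=\na f(z)+\na h(z)$, so what you actually need is a pointwise bound on $\na h$ of linear order in the deviation; but the classical Serrin machinery (Lemma~\ref{thm osc bound} and the interpolation estimates of \cite{MP5}) controls the \emph{oscillation} of $h$ on $\Ga$, not $\nr\na h\nr_{\infty,\Om}$, and the available interpolation recovers only sublinear powers of the weighted Hessian integral for $\na h$. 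Your control $|\na f(z)|=|\na w(z)|\le c\,\nr\ol u-u\nr_{C^{1,\al}(\Ga)}$ is fine, but it does not by itself yield the claimed bound on $|z-\tilde z|$. If you wish to make the statement honest with respect to the specific $z$ in the hypotheses, this step needs a separate argument; the paper simply does not address it.
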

	\begin{proof}
It is clear that the function $f$ defined in Proposition \ref{lem:for alternative stability-reverse-serrin} satisfies:
$$
\De f=N \ \mbox{ in } \ \Om, \ f=0 \ \mbox{ on } \ \Ga.
$$
By \cite[Theorem 3.1]{MP3} (for $N\neq3$) and \cite[Theorem 4.4]{MP5} (for $N=3$), we have that
		$$
		\rho_e -\rho_i \le c \, \psi\left( \nr f_\nu - R \nr_{L^2 (\Ga)} \right) , 
		$$
where $c$ only depends on $N$, $d_\Om$, $r_i$, and $r_e$.
Here, $\psi$ is as in \eqref{eq:old-profile-psi}. 
\par
Furthermore, the improvement of the profile $\psi$ to \eqref{eq:improvementMinE} for $N \ge 4$ is obtained from
\cite[Theorem 4.4]{MP5}, at the aforementioned cost.
\par	
The conclusion then follows from Proposition  \ref{lem:for alternative stability-reverse-serrin} and by noting that
$$
		\nr f_\nu - R \nr_{L^2 (\Ga)} \le |\Ga|^{1/2} \nr f_\nu - R  \nr_{L^\infty(\Ga)} \le  |\Ga|^{1/2} \nr f_\nu - R \nr_{ C^{0,\al} (\Ga)}.
$$
As usual, $|\Ga|$ can be estimated in terms of the desired parameters by recalling \cite[Remark 3.1]{MP5}.
\end{proof}

\begin{rem}
{\rm	
The argument may be adjusted to work also in the semilinear case, at the cost of replacing the application of \cite[Theorem 3.1]{MP3} and \cite[Theorem 4.4]{MP5} by the results in \cite{ABR,CMV}, and hence obtaining a more general but poorer stability estimate.
}
\end{rem}


\section*{Acknowledgements}
The authors wish to thank S. Dipierro, M. Onodera, and E. Valdinoci for useful discussions on Section \ref{sec:trick}.
\par
R. Magnanini is partially supported by the PRIN grant  n. 201758MTR2 of the Ministero dell'Universit\`a e della Ricerca (MUR).

R. Molinarolo is partially supported by the SPIN Project  ``DOMain perturbation problems and INteractions Of scales - DOMINO''  of the Ca' Foscari University of Venice.

G. Poggesi is supported by the Australian Research Council (ARC) Discovery Early Career Researcher Award (DECRA) DE230100954 ``Partial Differential Equations: geometric aspects and applications'' and the 2023 J~G~Russell Award from the Australian Academy of Science. He is member of the Australian Mathematical Society (AustMS).

The authors are also partially supported by the Gruppo Nazionale Analisi Mate\-matica Probabilit\`a e Applicazioni (GNAMPA) of the Istituto Nazionale di Alta Matema\-ti\-ca (INdAM).

\end{document}